\newtheorem{theorem}{{\sc Theorem}}[section]
\newtheorem{cor}[theorem]{{\sc Corollary}}
\newtheorem{lemma}[theorem]{{\sc Lemma}}
\newtheorem{prop}[theorem]{{\sc Proposition}}
\theoremstyle{remark}
\newtheorem{remark}[theorem]{{\sc Remark}}
\theoremstyle{definition}
\newcommand{\R}{\mathbb{R} }
\newcommand{\A}{\mathcal{A}}
\newcommand{\D}{\mathcal{D}}
\newcommand{\W}{\mathcal{W}}
\newcommand{\K}{\mathcal{K}}
\newcommand{\calH}{\mathcal{H}}
\newcommand{\calL}{\mathcal{L}}
\newcommand{\la}{\lambda}
\newcommand{\Om}{\Omega}
\providecommand{\abs}[1]{\lvert #1\rvert}
\providecommand{\fnorm}[1]{\lVert #1\rVert_\infty}
\DeclareMathOperator{\Var}{Var}
\DeclareMathOperator{\Exp}{Exp}
\DeclareMathOperator{\Laplace}{Laplace}
\DeclareMathOperator{\Poi}{Poisson}
\DeclareMathOperator{\Bin}{Binomial}
\renewcommand{\phi}{\varphi}
\renewcommand{\epsilon}{\varepsilon}
\renewcommand{\rho}{\varrho}
\begin{document}
\title[Rates of convergence for random sums]{On rates of convergence and Berry-Esseen bounds for random sums of centered random variables with finite third moments}
\author{Christian D\"obler}
\thanks{Technische Universit\"at M\"unchen, Fakult\"at f\"ur Mathematik, Lehrstuhl f\"ur Mathematische Physik, D-85740 M\"unchen, Germany. \\
christian.doebler@tum.de\\
{\it Keywords:} Random sums, geometric distribution, rates of convergence, Laplace distribution, Poisson distribution, binomial distribution, Berry-Esseen theorem}
\begin{abstract}
We show, how the classical Berry-Esseen theorem for normal approximation may be used to derive rates of convergence for random sums of centerd, real-valued random variables with respect to a certain class of probability metrics, including the Kolmogorov and the Wasserstein distances. This technique is applied to several examples, including the approximation by a Laplace distribution of a geometric sum of centered random variables with finite third moment, where a concrete Berry-Esseen bound is derived. This bound reduces to a bound of the supposedly optimal order $\sqrt{p}$ in the i.i.d. case. 
\end{abstract}

\maketitle

\section{Introduction}\label{intro}
Random sums are random variables of the form 

\begin{equation}\label{ineq1}
S=\sum_{j=1}^N X_j\,,
\end{equation}

where $X_0,X_1,X_2,\dotsc$ and $N$ are random variables on a common probability space $(\Om,\A,P)$ and where 
$N$ assumes nonnegative integer values. Random variables of the form \eqref{ineq1} appear frequently in modern probabiliy theory, because many models for example from physics, finance, reliability and risk theory naturally lead to the consideration of such sums. Furthermore, sometimes a model, which looks completely different from \eqref{ineq1} at the outset, may be transformed into a random sum and then general theory of such sums may be invoked to study the original model \cite{GneKo}.\\ 
There already exists some literature on the asymptotic distributions of such random sums, see e.g. \cite{GneKo} for 
general distributions of the \textit{index} $N$ and \cite{Kalash} for the special case of \textit{geometric sums}, where $N=N_p$ has the geometric distribution on $\mathbb{Z}\cap[1,\infty)$ with parameter $p\in(0,1)$. Recall that this means that $P(N=k)=p(1-p)^{k-1}$ for $k=1,2,\dotsc$. In \cite{Kalash}, also quantitative theorems, giving bounds on the approximation error, are derived but mainly nonnegative summands $X_j$ are considered. Contrarily, the theorems from 
\cite{GneKo} are very general, not even the existence of moments is assumed at the outset, but are only qualitative in that they are not giving any error bounds. In \cite{PeRoe} \textit{Stein's method} for the exponential distribution together with a coupling construction, relying on the equilibrium transformation from renewal theory, 
are used to prove error bounds for the convergence of geometric (as well as more general) random sums of nonnegative (possibly dependent) random variables to the exponential distribution, generalizing a classical theorem by R\'{e}nyi. Recently, in \cite{PiRen} a suitable version of Stein's method for the Laplace distribution and a distributional transformation in the spirit of the theory from \cite{PeRoe} were developed to prove rates of convergence for geometric (as well as more general) sums of centered random variables with finite third moments to the Laplace distribution. At least in the special case of a geometric sum, the theorems from \cite{PiRen} seem to yield optimal rates of convergence in the bounded Lipschitz metric, but due to the distributional transformation, which is used for the coupling construction, the summands $X_j$ have to fullfil a certain symmetry condition in order that the coupling can be constructed.\\ 
The purpose of the present paper is to show how one can use the classical Berry-Esseen theorem for normal approximation along with asymptotic results on the index $N$ to obtain rates of convergence for suitably standardized versions of the sum \eqref{ineq1}. This is in a way analogous to the  \textit{transfer theorems} from \cite{GneKo}.\\
The paper is organized as follows. In Section \ref{setup} we present the general setup and some abstract results, which help bound the distance from (a scaled version of) the random variable $S$ to a suitable limit distribution. Then, in Section \ref{apps} we prove rates of convergence for several distributions of the index $N$, including the geometric, the Poisson and the binomial distributions.\\

\section{Abstract approximation results}\label{setup} 
Througout assume that the random variables $N,X_1,X_2,\dotsc$ are independent, that the $X_j$ are centered having finite third moments $\xi_j:=E\abs{X_j}^3$ and that 
$0<\mu:=E[N]<\infty$. Furthermore, we assume $0<\sigma_j^2:=E[X_j^2]=\Var(X_j)$, $j\geq1$, and define 

\begin{equation}\label{ateq1}
 W:=\mu^{-1/2}\sum_{j=1}^N X_j\,.
\end{equation}
 
Then, letting $s_N^2:=\Var(W|N)$ it is easy to see that $s_N^2=\mu^{-1}\sum_{j=1}^N \sigma_j^2$ and that $E[W|N]=0$. Furthermore, since 
$\Var(W)=E[\Var(W|N)]+\Var(E[W|N])$, it follows that 

\begin{align}\label{ateq2}
 \Var(W)&=E[s_N^2]=\mu^{-1}\sum_{n=0}^\infty\sum_{j=1}^n\sigma_j^2 P(N=n)\notag\\
&= \mu^{-1} \sum_{j=1}^\infty\sum_{n=j}^\infty P(N=n)= \mu^{-1} \sum_{j=1}^\infty P(N\geq j)\,.
\end{align}
 
Now, assume that $Z_1,Z_2,\dotsc$ are further random variables on the same probability space such that $Z_j\sim N(0,\sigma_j^2)$, $j\geq 1$, and that also 
$N,Z_1,Z_2,\dotsc$ are independent. Then, we define the random variable 

\begin{equation}\label{ateq3}
Z:=\mu^{-1/2}\sum_{j=1}^N Z_j\,.
\end{equation}

It is clear that $\calL(Z|N)=N(0,s_N^2)$ and hence, $\Var(Z|N)=s_N^2=\Var(W|N)$. For a given class $\calH$ of Borel-measurable functions $h:\R\rightarrow\R$ 
consider the distance 

\begin{equation}\label{ateq4}
 d_{\calH}(\mu,\nu):=\sup_{h\in\calH}\Bigl| \int_\R hd\mu-\int_\R hd\nu\Bigr|\,,
\end{equation}

on the space of probability measures with respect to which every $h\in\calH$ is integrable. For instance, if 
$\calH=\W$ is the class of $1$-Lipschitz functions, then \eqref{ateq4} reduces to the \textit{Wasserstein distance} of $\mu$ and $\nu$. On the other hand, 
if $\calH=\K=\{1_{(-\infty,z]}\,:\,z\in\R\}$, then $d_\K$ is called the \textit{Kolmogorov distance}. Both of these distances are in fact metrics, which yield stronger topologies on their domains than the topology induced by weak convergence (see, e.g. \cite{Dud02}). \\
For our approximation theorems we need the following result, which is an instance of the classical Berry-Esseen theorem 
(see, e.g.\cite{Fel2}).

\begin{prop}\label{berry}    
Under the above assumptions we have the following bounds: 
\begin{enumerate}[{\normalfont(a)}]
 \item $d_\W\bigl(\calL(W|N),N(0,s_N^2)\bigr)\leq\frac{C_\W}{s_N^2\mu^{3/2}}\sum_{j=1}^N \xi_j $
 \item $d_\K\bigl(\calL(W|N),N(0,s_N^2)\bigr)\leq\frac{C_\K}{s_N^3\mu^{3/2}}\sum_{j=1}^N \xi_j $
\end{enumerate}
Here, $C_\W$ and $C_\K$ are absolute and finite constants.
\end{prop}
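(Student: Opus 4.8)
The plan is to recognize that both bounds are simply the classical Berry-Esseen theorem applied conditionally on the value of $N$.

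Since $N,X_1,X_2,\dotsc$ are independent, conditioning on the event $\{N=n\}$ leaves the joint law of $X_1,\dotsc,X_n$ unchanged, so that, given $N=n$, the variable $W$ is distributed as $\mu^{-1/2}T_n$ with $T_n:=\sum_{j=1}^n X_j$ an ordinary sum of $n$ independent, centered summands. Writing $B_n^2:=\sum_{j=1}^n\sigma_j^2=\Var(T_n)$, I would record the two identities $B_n^2=\mu s_n^2$ (equivalently $s_n=\mu^{-1/2}B_n$) and $\mu^{-1/2}T_n=s_n\,(T_n/B_n)$. Because the asserted inequalities concern the random conditional law $\calL(W|N)$, it suffices to prove the corresponding deterministic bound for each fixed $n\geq1$ with $P(N=n)>0$, whereupon $\sigma_j^2>0$ guarantees $B_n>0$; the claim then follows by replacing $n$ with $N$, the degenerate case $n=0$ being trivial since both conditional laws reduce to $\delta_0$.

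For each such $n$ the classical Berry-Esseen theorem for independent, non-identically distributed centered summands would be applied to $X_1,\dotsc,X_n$, in its uniform form and in its $L^1$-analogue, to give
\begin{equation*}
 d_\K\bigl(\calL(T_n/B_n),N(0,1)\bigr)\leq\frac{C_\K}{B_n^3}\sum_{j=1}^n\xi_j
 \qquad\text{and}\qquad
 d_\W\bigl(\calL(T_n/B_n),N(0,1)\bigr)\leq\frac{C_\W}{B_n^3}\sum_{j=1}^n\xi_j
\end{equation*}
with finite absolute constants $C_\K,C_\W$.

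It then remains to transfer these bounds to the unstandardized quantities by the elementary scaling behaviour of the two metrics. The Kolmogorov distance is scale invariant (for $a>0$ one has $P(aU\leq z)=P(U\leq z/a)$), which leaves the bound unchanged; inserting $B_n^3=\mu^{3/2}s_n^3$ produces exactly (b). The Wasserstein distance, by contrast, is positively homogeneous of degree one, $d_\W(\calL(aU),\calL(aV))=a\,d_\W(\calL(U),\calL(V))$, so applying it with $a=s_n$ to $U=T_n/B_n$ and $V\sim N(0,1)$ yields
\begin{equation*}
 d_\W\bigl(\calL(W|N=n),N(0,s_n^2)\bigr)=s_n\,d_\W\bigl(\calL(T_n/B_n),N(0,1)\bigr)\leq\frac{s_n C_\W}{B_n^3}\sum_{j=1}^n\xi_j=\frac{C_\W}{\mu^{3/2}s_n^2}\sum_{j=1}^n\xi_j\,,
\end{equation*}
where the final identity uses $s_n/B_n^3=\mu^{-1/2}B_n^{-2}=(\mu^{3/2}s_n^2)^{-1}$; this is (a).

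The computations are routine, so I do not expect a genuine obstacle. The only points demanding care are the correct bookkeeping of the powers of $\mu$ and $s_n$ arising from $s_n=\mu^{-1/2}B_n$, and the fact that the Wasserstein distance, unlike the Kolmogorov distance, picks up the extra factor $s_n$ under rescaling, which is precisely what accounts for the different exponents $s_N^{-2}$ versus $s_N^{-3}$ in the two parts. One should also be sure to cite a version of the Berry-Esseen theorem valid for non-identically distributed summands and covering both the uniform and the $L^1$ metric.
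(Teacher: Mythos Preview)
Your proposal is correct and follows essentially the same approach as the paper: apply the classical Berry--Esseen theorem conditionally on $N=n$, then use that $d_\K$ is scale-invariant while $d_\W$ is positively $1$-homogeneous to pass from the standardized sum $T_n/B_n$ to $W$. The paper's proof is terser but invokes exactly these two ingredients, so your more explicit bookkeeping with $B_n^2=\mu s_n^2$ is just a spelled-out version of the same argument.
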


\begin{remark}\label{berryrem}
 The search for the optimal value of the constant $C_\K$, the \textit{Berry-Esseen constant}, is still going on. As far as we know, the best proven bound is 
$C_\K\leq 0.56$ by Shevtsova \cite{Shev}.\\
Using Stein's method (see \cite{CheShaSing}), one may obtain $C_\W\leq6$, which seems to be far from optimal.   
\end{remark}

\begin{proof}[Proof of Proposition \ref{berry}]
Part (b) follows immediately from the classical Berry-Esseen theorem (see \cite{Fel2}) , since the Kolmogorov distance is scale-invariant. Since for random variables 
$X$ and $Y$ (with existing first moments) and $\sigma>0$ we have 

\[d_\W\bigl(\calL(\sigma X),\calL(\sigma Y)\bigr)=\sigma d_\W\bigl(\calL(X),\calL(Y)\bigr)\,,\]   
 
the claim of (a) follows from the Berry-Esseen result for the Wasserstein distance (see \cite{CheShaSing}, for example).\\
\end{proof}

Now, let $h\in\calH$ be a given test function. Then, 

\begin{align}\label{ateq5}
&\bigl|E[h(W)]-E[h(Z)]\bigr|=\bigl|E\bigl[E[h(W)|N]-E[h(Z)|N]\bigr]\bigr|\notag\\
&\leq E\bigl|E[h(W)|N]-E[h(Z)|N]\bigr|\notag\\
&\leq E\bigl|d_{\calH}\bigl(\calL(W|N),\calL(Z|N)\bigl)\bigr|\notag\\
&=E\Bigl[d_{\calH}\bigl(\calL(W|N),\calL(Z|N)\bigl)\Bigr]\,,
\end{align}

and hence, by taking the supremum over $h\in\calH$, we obtain from \eqref{ateq5} that

\begin{equation}\label{ateq6}
 d_{\calH}\bigl(\calL(W),\calL(Z)\bigr)\leq E\Bigl[d_{\calH}\bigl(\calL(W|N),\calL(Z|N)\bigl)\Bigr]\,.
\end{equation}

Recalling that $\calL(Z|N)=N(0,s_N^2)$ by Proposition \ref{berry} this leads to the following theorem.

\begin{prop}\label{attheo1}
Under our general assumptions, we have the following bounds:
\begin{enumerate}[{\normalfont(a)}] 
 \item $d_\W\bigl(\calL(W),\calL(Z)\bigr)\leq\frac{C_\W}{\mu^{3/2}}E\Bigl[\frac{1}{s_N^2}\sum_{j=1}^N \xi_j\Bigr]$
 \item  $d_\K\bigl(\calL(W),\calL(Z)\bigr)\leq\frac{C_\K}{\mu^{3/2}}E\Bigl[\frac{1}{s_N^3}\sum_{j=1}^N \xi_j\Bigr]$
\end{enumerate}
\end{prop}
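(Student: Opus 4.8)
The plan is to feed the conditional Berry-Esseen bounds of Proposition \ref{berry} into the inequality \eqref{ateq6} and then integrate out the index $N$. Essentially all the ingredients have already been assembled, so the proof amounts to a careful bookkeeping argument combining the three facts at hand: the transfer inequality \eqref{ateq6}, the identification $\calL(Z|N)=N(0,s_N^2)$, and the conditional bounds of Proposition \ref{berry}.

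First I would specialize \eqref{ateq6} to the two classes $\calH=\W$ and $\calH=\K$, obtaining
\[ d_\W\bigl(\calL(W),\calL(Z)\bigr)\leq E\bigl[d_\W\bigl(\calL(W|N),\calL(Z|N)\bigr)\bigr] \]
and the analogous inequality for $d_\K$. Next I would invoke the identity $\calL(Z|N)=N(0,s_N^2)$, established just before Proposition \ref{berry}, to replace $\calL(Z|N)$ by $N(0,s_N^2)$ inside both conditional distances. At this point the integrand on the right is exactly the quantity bounded (pointwise in $N$) by Proposition \ref{berry}.

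I would then apply parts (a) and (b) of Proposition \ref{berry}. Since $s_N^2=\mu^{-1}\sum_{j=1}^N\sigma_j^2$ and the $\xi_j$ are deterministic, both the conditional distances and their stated upper bounds are measurable functions of $N$ alone, so the bounds hold for each realized value of $N$. Using monotonicity of the expectation (the integrands being nonnegative) yields
\[ E\bigl[d_\W\bigl(\calL(W|N),N(0,s_N^2)\bigr)\bigr]\leq\frac{C_\W}{\mu^{3/2}}E\Bigl[\frac{1}{s_N^2}\sum_{j=1}^N\xi_j\Bigr], \]
and likewise with $s_N^3$ and $C_\K$ for the Kolmogorov distance. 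Pulling the deterministic factors $C_\W/\mu^{3/2}$ and $C_\K/\mu^{3/2}$ outside the expectation gives the two claimed bounds.

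I do not anticipate a genuine obstacle, since each step is an instance of a result already in hand. The only points requiring a little care are the measurability and nonnegativity needed to justify applying the conditional bounds inside the expectation term by term, together with the implicit understanding that the right-hand expectations may be infinite (in which case the bounds hold trivially), so that the statement is informative precisely when they are finite. I would also emphasize that the argument is completely insensitive to the particular law of $N$, which is exactly what makes these bounds applicable to the various index distributions treated in Section \ref{apps}.
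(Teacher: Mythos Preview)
Your proposal is correct and follows exactly the paper's own approach: the paper simply remarks that combining \eqref{ateq6} with the identification $\calL(Z|N)=N(0,s_N^2)$ and Proposition \ref{berry} yields the result, and that is precisely what you have spelled out in detail.
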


\begin{cor}\label{atcor1}
Assume that, additionally to our general assumptions, the $X_j$ are identically distributed with $\sigma^2:=E[X_1^2]$ and $\xi:=E\abs{X_1}^3$. Then, 
 \begin{enumerate}[{\normalfont(a)}] 
 \item $d_\W\bigl(\calL(W),\calL(Z)\bigr)\leq\frac{C_\W\xi}{\mu^{1/2}\sigma^2}$ and
 \item  $d_\K\bigl(\calL(W),\calL(Z)\bigr)\leq\frac{C_\K\xi}{\sigma^3}E[\frac{1}{\sqrt{N}}]$.
\end{enumerate}
\end{cor}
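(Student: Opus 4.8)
The plan is to specialize the bounds of Proposition \ref{attheo1} to the i.i.d.\ setting by direct substitution. First I would record that when the $X_j$ share the common variance $\sigma^2$ and common third absolute moment $\xi$, the conditional variance simplifies to $s_N^2=\mu^{-1}N\sigma^2$ and the sum of third moments becomes $\sum_{j=1}^N\xi_j=N\xi$. The whole corollary then reduces to evaluating the two expectations appearing on the right-hand sides of Proposition \ref{attheo1}.

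For part (a) the key observation is that the random integrand collapses to a constant. On the event $\{N\geq1\}$ we have
\[
\frac{1}{s_N^2}\sum_{j=1}^N\xi_j=\frac{N\xi}{\mu^{-1}N\sigma^2}=\frac{\mu\xi}{\sigma^2}\,,
\]
which no longer depends on $N$, so the expectation is immediate; multiplying by $C_\W\mu^{-3/2}$ yields $C_\W\xi/(\mu^{1/2}\sigma^2)$. For part (b) I would instead use $s_N^3=(s_N^2)^{3/2}=\mu^{-3/2}N^{3/2}\sigma^3$, so that on $\{N\geq1\}$
\[
\frac{1}{s_N^3}\sum_{j=1}^N\xi_j=\frac{\mu^{3/2}\xi}{\sigma^3}\,\frac{1}{\sqrt{N}}\,;
\]
factoring out the deterministic prefactor and multiplying by $C_\K\mu^{-3/2}$ leaves exactly $C_\K\sigma^{-3}\xi\,E[1/\sqrt{N}]$.

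The only point that needs care is the event $\{N=0\}$, on which $s_N^2=0$ and both conditional laws $\calL(W|N)$ and $\calL(Z|N)$ are the point mass at the origin, so the conditional distance vanishes and the corresponding integrand should be read as $0$ rather than $0/0$. Once this convention is fixed, the remaining contribution comes only from $\{N\geq1\}$ (and in part (a) one may even retain the harmless extra factor $P(N\geq1)\leq1$). There is no real obstacle here: the statement is a purely algebraic specialization of Proposition \ref{attheo1}, and the entire proof is bookkeeping of these substitutions.
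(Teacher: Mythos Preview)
Your proposal is correct and is exactly the intended derivation: the paper states the result as an immediate corollary of Proposition~\ref{attheo1} without writing out a proof, and your algebraic specialization (including the remark on the event $\{N=0\}$) is precisely the bookkeeping that justifies it.
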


Note that the bounds from Proposition \ref{attheo1} and from Corollary \ref{atcor1} tend to converge to zero as $\mu\to\infty$, i.e. as the number of summands is infinitely growing.\\
 The next step is to find bounds on the distance 
$d_{\calH}\bigl(\calL(Z),\calL(Y)\bigr)$ for some random variable $Y$. In order to motivate the concrete form of $Y$ let us for the moment assume that we are given a whole sequence $(N_k)_{k\geq1}$ of random indices, such that 
$\mu_k:=E[N_k]\to\infty$ as $k\to\infty$ and that $\mu_k^{-1}N_k\stackrel{\D}{\rightarrow} U$, where $U$ is some nonnegative 
random variable. Furthermore, suppose that the $X_j$ are identically distributed. Let us consider the characteristic function $\chi_{Z^{(k)}}$ of the random variable 
$Z^{(k)}:=\mu^{-1}\sum_{j=1}^{N_k}Z_j$. For $t\in\R$ we obtain

\begin{align}\label{ateq7}
\chi_{Z^{(k)}}(t)&=E[e^{itZ^{(k)}}]=E\bigl[E[e^{itZ^{(k)}}|N_k]\bigr]\notag\\
&=E\bigl[e^{-t^2s_{N_k}^2/2}\bigr]=E\bigl[e^{-\frac{t^2\sigma^2}{2}\frac{N_k}{\mu_k}}\bigr]\,.
\end{align}
  
Since the mapping $[0,\infty)\ni x\mapsto e^{-\frac{t^2\sigma^2x}{2}}\in\R$ is bounded and continuous for each $t\in\R$ we conclude that 

\begin{equation}\label{ateq8}
\chi_{Z^{(k)}}(t)\stackrel{k\to\infty}{\longrightarrow}E\bigl[e^{-\frac{t^2\sigma^2}{2}U}\bigr]=:\psi(t)\,.
\end{equation}

Note that $\psi(1)=1$ and that $\psi$ is continuous at $0$ by the dominated convergence theorem and hence, is a characteristic 
function of some random variable $Y$. It is easy to see by characteristic functions that $Y\stackrel{\D}{=}\sigma\sqrt{U}\zeta$, where $\zeta\sim N(0,1)$ is independent of $U$. Thus, we can conclude that 

\begin{equation}\label{ateq9}
Z^{(k)}\stackrel{\D}{\rightarrow}Y:=\sigma\sqrt{U}\zeta\text{  as }k\to\infty\,.
\end{equation}

If we also let $W^{(k)}:=\mu_k^{-1}\sum_{j=1}^{N_k} X_j$ and if we assume that the bounds from Corollary \ref{atcor1} (with $Z$ replaced by $Z^{(k)}$ and $W$ by $W^{(k)}$) tend to zero as $k$ goes to infinity, we can conclude that also 

\begin{equation}\label{ateq10}
W^{(k)}\stackrel{\D}{\rightarrow}Y=\sigma\sqrt{U}\zeta\text{  as }k\to\infty\,.
\end{equation}

The distribution $\calL(Y)$ is a scale mixture of the standard normal distribution and the result \eqref{ateq10} is in accordance with the theory from Chapter 3 of \cite{GneKo}, see for instance Theorem 3.3.2.

Now, the next task is to derive concrete error bounds on the distance\\ $d_{\calH}\bigl(\calL(Z),\calL(Y)\bigr)$ 
from bounds on the distance $d_{\calH}\bigl(\calL(\mu^{-1}N),\calL(U)\bigr)$. This appears to be possible, since in the case of identically distributed $X_j$ we have with $\zeta$ independent of $N$ that $Z\stackrel{\D}{=}\sigma\sqrt{\mu^{-1}N}\zeta$, as is easily checked. We will provide two propositions for the iid case, the first bounding the Kolmogorov distance of $\calL(Z)$ and $\calL(Y)$ in terms of $d_\K(\calL(\mu^{-1}N),\calL(U))$  and the second giving bounds 
on the Wasserstein distance of $\calL(Z)$ and $N(0,\sigma^2)$ in the case that $U=1$ is constant. Then we will turn to the case of not necessarily identically distributed 
summands $X_j$.

\begin{prop}\label{atprop1}
Let $\zeta\sim N(0,1)$ be independent of $N$ and of $U$. Then, for every $\sigma>0$
\[d_{\K}\bigl(\calL(\sigma\sqrt{\mu^{-1}N}\zeta),\calL(\sigma\sqrt{U}\zeta)\bigr)\leq d_\K(\calL(\mu^{-1}N),\calL(U))\,.\]
\end{prop}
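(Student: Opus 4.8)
The plan is to reduce the statement to a comparison of expectations of a single monotone test function under the two laws $\calL(\mu^{-1}N)$ and $\calL(U)$, and then to transfer that comparison onto the difference of the corresponding distribution functions by integration by parts. Write $V:=\mu^{-1}N$ for brevity; both $V$ and $U$ are nonnegative and, by hypothesis, independent of $\zeta$. Fix $z\in\R$. Conditioning on $V$ and using the scaling of the normal law, for $v>0$ one has $P(\sigma\sqrt{v}\,\zeta\le z)=\Phi\bigl(z/(\sigma\sqrt{v})\bigr)$, where $\Phi$ is the standard normal distribution function. Hence, setting
\[
f_z(v):=\Phi\bigl(z/(\sigma\sqrt{v})\bigr),\qquad v>0,
\]
and extending $f_z$ to $v=0$ by its natural value $1_{\{z\ge0\}}$, I would record that
\[
P\bigl(\sigma\sqrt{V}\,\zeta\le z\bigr)=E\bigl[f_z(V)\bigr]\quad\text{and}\quad P\bigl(\sigma\sqrt{U}\,\zeta\le z\bigr)=E\bigl[f_z(U)\bigr].
\]
It then remains to bound $\sup_{z\in\R}\bigl|E[f_z(V)]-E[f_z(U)]\bigr|$.

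The key structural observation is that, for each fixed $z$, the map $v\mapsto f_z(v)$ is monotone on $(0,\infty)$: since $v\mapsto z/(\sigma\sqrt{v})$ is decreasing for $z>0$ and increasing for $z<0$, the function $f_z$ is nonincreasing when $z>0$, nondecreasing when $z<0$, and constant ($\equiv\tfrac12$) when $z=0$. Moreover $f_z$ takes values in $[0,1]$, with $f_z(0^+)\in\{0,1\}$ and $f_z(\infty)=\Phi(0)=\tfrac12$, so its total variation over $[0,\infty)$ equals $\tfrac12$ (for $z=0$ this is again $\tfrac12$, coming from the jump at the origin).

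Let $F_V$ and $F_U$ denote the distribution functions of $V$ and $U$. Since $f_z$ has bounded variation and $F_V-F_U$ vanishes at $0^-$ and at $+\infty$ (both are distribution functions of nonnegative variables, and $f_z(\infty)\cdot 0=0$), Riemann--Stieltjes integration by parts gives
\[
E[f_z(V)]-E[f_z(U)]=\int_{[0,\infty)}f_z\,d(F_V-F_U)=-\int_{[0,\infty)}\bigl(F_V-F_U\bigr)\,df_z,
\]
whence
\[
\bigl|E[f_z(V)]-E[f_z(U)]\bigr|\le\sup_{v}\bigl|F_V(v)-F_U(v)\bigr|\int_{[0,\infty)}|df_z|=\tfrac12\,d_\K\bigl(\calL(V),\calL(U)\bigr).
\]
Taking the supremum over $z\in\R$ then yields the asserted bound (in fact with the sharper constant $\tfrac12$).

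I expect the only delicate point to be the justification of the integration by parts and of the total-variation estimate at the endpoint $v=0$, in particular if $V$ or $U$ carries an atom there; in the applications of interest $V=\mu^{-1}N\ge\mu^{-1}>0$ and the limit $U$ is continuous, so no such atom occurs and the boundary contributions are harmless. As a cross-check avoiding Stieltjes subtleties, I would note that the symmetry of $\zeta$ lets one write, for $V>0$, the tail probability in terms of $P(\sigma^2V\zeta^2\le z^2)$, reducing the problem to comparing $E[G(V)]$ and $E[G(U)]$ for the monotone function $G(v)=P(\sigma^2v\zeta^2\le z^2)$; a layer-cake decomposition of this difference reproduces the same estimate with the factor $\tfrac12$.
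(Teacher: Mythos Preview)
Your argument is correct and even yields a sharper inequality than the one stated, with an extra factor $\tfrac12$. The route, however, is genuinely different from the paper's. The paper conditions on $\zeta$ rather than on the mixing variables: for fixed $\zeta=s$, the event $\{\sigma\sqrt{V}\,\zeta\le z\}$ becomes $\{\sqrt{V}\le z/(\sigma s)\}$ or $\{\sqrt{V}\ge z/(\sigma s)\}$ according to the sign of $s$, and one bounds the conditional difference directly by $d_\K\bigl(\calL(\sqrt{V}),\calL(\sqrt{U})\bigr)$, which in turn is at most $d_\K\bigl(\calL(V),\calL(U)\bigr)$ since $V,U\ge0$ and $x\mapsto\sqrt{x}$ is increasing. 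That argument is completely elementary (no integration by parts, no smoothness of $\Phi$) and in fact uses nothing about $\zeta$ beyond independence, so it would apply to any scale mixture; the price is the constant $1$. Your approach instead conditions on $V$ and $U$, recognizes $v\mapsto\Phi\bigl(z/(\sigma\sqrt v)\bigr)$ as monotone with total variation $\tfrac12$, and integrates by parts against $F_V-F_U$; this buys the improved constant, at the cost of exploiting the specific symmetry $P(\zeta\le0)=\tfrac12$ and a mild analytic argument. The worry you raise about atoms at $v=0$ is in fact harmless even without assuming $V\ge\mu^{-1}>0$: for $z\ne0$ your $f_z$ is continuous at $0$, and for $z=0$ a direct computation gives $\bigl|E[f_0(V)]-E[f_0(U)]\bigr|=\tfrac12\,\bigl|P(V=0)-P(U=0)\bigr|\le\tfrac12\,d_\K\bigl(\calL(V),\calL(U)\bigr)$.
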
   

\begin{proof}
For $z\in\R$ we have 

\begin{align}\label{ateq11}
&\bigl|P(\sigma\sqrt{U}\zeta\leq z)-P(\sigma\sqrt{\mu^{-1}N}\zeta\leq z)\bigr|\notag\\
&=\Bigl|E\bigl[P(\sigma\sqrt{U}\zeta\leq z|\zeta)- P(\sigma\sqrt{\mu^{-1}N}\zeta\leq z|\zeta)\bigl]\Bigr|\notag\\
&\leq E\Bigl|P(\sigma\sqrt{U}\zeta\leq z|\zeta)- P(\sigma\sqrt{\mu^{-1}N}\zeta\leq z|\zeta)\Bigr|\,.
\end{align}

Now, for each $s\in\R$ by the independence of $\zeta$ and $U$ and of $\zeta$ and $N$

\begin{align}\label{ateq12}
&\Bigl|P(\sigma\sqrt{U}\zeta\leq z|\zeta=s)- P(\sigma\sqrt{\mu^{-1}N}\zeta\leq z|\zeta=s)\Bigr|\notag\\
&=\begin{cases}
\Bigl|P\bigl(\sqrt{U}\leq \frac{z}{\sigma s}\bigr)-P\bigl(\sqrt{\mu^{-1}N}\leq\frac{z}{\sigma s}\bigr)\Bigr|, &s>0\\
0, &s=0\\
\Bigl|P\bigl(\sqrt{U}\geq \frac{z}{\sigma s}\bigr)-P\bigl(\sqrt{\mu^{-1}N}\geq\frac{z}{\sigma s}\bigr)\Bigr|, &s<0
\end{cases}\notag\\
&\leq d_\K\bigl(\calL(\sqrt{U}),\calL(\sqrt{\mu^{-1}N})\bigr)\,,
\end{align}

since the Kolmogorov distance is also induced by the test functions $1_{[t,\infty)}$, $t\in\R$. Now, recall that the random variables $\mu^{-1}N$ and $U$ are nonnegative and, hence, for each $t\geq0$ we have 

\begin{align*}
\bigl|P(\sqrt{U}\leq t)-P(\sqrt{\mu^{-1}N}\leq t)\bigr|&=\bigl|P(U\leq t^2)-P(\mu^{-1}N\leq t^2)\bigr|\notag\\
&\leq d_\K\bigl(\calL(U),\calL(\mu^{-1}N)\bigr)\,,
\end{align*}

and taking the supremum over $t\geq0$ yields 

\begin{equation}\label{ateq13}
d_\K\bigl(\calL(\sqrt{U}),\calL(\sqrt{\mu^{-1}N})\bigr)\leq  d_\K\bigl(\calL(U),\calL(\mu^{-1}N)\bigr)\,.
\end{equation} 

Thus, from \eqref{ateq11}, \eqref{ateq12} and \eqref{ateq13} we obtain for every $z\in\R$ 

\begin{align*}
\bigl|P(\sigma\sqrt{U}\zeta\leq z)-P(\sigma\sqrt{\mu^{-1}N}\zeta\leq z)\bigr|&\leq E\bigl[d_\K\bigl(\calL(U),\calL(\mu^{-1}N)\bigr)\bigr]\notag\\
&=d_\K\bigl(\calL(U),\calL(\mu^{-1}N)\bigr)\,,
\end{align*}

proving the claim.\\
\end{proof}

\begin{prop}\label{atprop2}
If $\zeta\sim N(0,1)$ is independent of $N$, then for every $\sigma>0$ 

\[d_\W\bigl(\calL(\sigma\sqrt{\mu^{-1}N}\zeta), N(0,\sigma^2)\bigr)
\leq\sigma\sqrt{\frac{2}{\pi}}\frac{\sqrt{\Var(N)}}{\mu}\,.\]

\end{prop}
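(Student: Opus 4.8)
The plan is to exploit the obvious coupling in which the same standard normal $\zeta$ and the same index $N$ appear in both variables. Writing $Y:=\sigma\zeta\sim N(0,\sigma^2)$ and $X:=\sigma\sqrt{\mu^{-1}N}\zeta$ on the same probability space, I would first observe that the Wasserstein distance is bounded above by $E|X-Y|$: for every $1$-Lipschitz $h$ one has $|E[h(X)]-E[h(Y)]|\leq E|h(X)-h(Y)|\leq E|X-Y|$, and taking the supremum over $h\in\W$ gives $d_\W(\calL(X),N(0,\sigma^2))\leq E|X-Y|$. Since $X-Y=\sigma\zeta\bigl(\sqrt{\mu^{-1}N}-1\bigr)$ and $\zeta$ is independent of $N$, this factorises as $\sigma\,E|\zeta|\,E\bigl|\sqrt{\mu^{-1}N}-1\bigr|$, and the elementary computation $E|\zeta|=\sqrt{2/\pi}$ reduces everything to estimating $E\bigl|\sqrt{\mu^{-1}N}-1\bigr|$.

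Next I would note that $\mu^{-1}N$ has mean $1$, so it is natural to compare $\sqrt{\mu^{-1}N}$ with $1$ through its square. The key elementary step is the inequality $|\sqrt{v}-1|\leq|v-1|$ valid for all $v\geq0$, which follows from the identity $|\sqrt{v}-1|=|v-1|/(\sqrt{v}+1)$ together with $\sqrt{v}+1\geq1$. Applying this pointwise with $v=\mu^{-1}N$ yields $E\bigl|\sqrt{\mu^{-1}N}-1\bigr|\leq E\bigl|\mu^{-1}N-1\bigr|$.

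Finally, a single application of the Cauchy-Schwarz inequality (equivalently, Jensen's inequality for the square) bounds $E|\mu^{-1}N-1|$ by $\sqrt{E[(\mu^{-1}N-1)^2]}=\sqrt{\Var(\mu^{-1}N)}=\mu^{-1}\sqrt{\Var(N)}$, using $E[\mu^{-1}N]=1$. Chaining the three estimates then produces $d_\W\bigl(\calL(X),N(0,\sigma^2)\bigr)\leq\sigma\sqrt{2/\pi}\,\mu^{-1}\sqrt{\Var(N)}$, which is exactly the asserted bound.

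I do not anticipate a serious obstacle here; the only genuine idea is the square-root inequality $|\sqrt{v}-1|\leq|v-1|$, which allows one to pass from a distance measured in the $\sqrt{\cdot}$-scale back to the variance of $N$ itself. One should keep in mind, however, that the resulting bound is presumably not sharp, since replacing $|\sqrt{v}-1|$ by $|v-1|$ discards the factor $1/(\sqrt{v}+1)$; a more careful treatment of $E\bigl|\sqrt{\mu^{-1}N}-1\bigr|$ could conceivably improve the constant, but the clean form $\mu^{-1}\sqrt{\Var(N)}$ is what makes the estimate convenient for the applications in Section \ref{apps}.
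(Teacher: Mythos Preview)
Your argument is correct and matches the paper's proof in all essential respects: the factorisation via independence of $\zeta$ and $N$ yielding the constant $\sqrt{2/\pi}$, the square-root inequality $|\sqrt{v}-1|\leq|v-1|$, and the Cauchy--Schwarz step $E|\mu^{-1}N-1|\leq\mu^{-1}\sqrt{\Var(N)}$ are exactly the ingredients the paper uses. The only difference is organisational: the paper splits the argument into two lemmas, the first of which (Lemma~\ref{atle3}) is stated for a general nonnegative mixing variable $U$ and proved by conditioning on $\zeta$ at the level of test functions, whereas you go straight to the coupling bound $d_\W\leq E|X-Y|$ for the special case $U=1$; both routes produce the same inequality with the same constant.
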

\begin{remark}\label{atrem2}
Of course, the conclusion of Proposition \ref{atprop2} can only yield useful bounds, if normal approximation of $\calL(W)$ makes sense. The upper bound 
shows that $\Var(N)$ should be of smaller order than $\mu^2=(E[N])^2$ in order for normal approximation to be plausible. 
\end{remark}

Proposition \ref{atprop2} immediately follows from the following two lemmas.

\begin{lemma}\label{atle3}
If $U\geq0$ and $\zeta\sim N(0,1)$ is independent of $U$ and of $N$, then

\[d_\W\bigl(\calL(\sigma\sqrt{U}\zeta),\calL(\sigma\sqrt{\mu^{-1}N}\zeta)\bigr)
\leq\sigma\sqrt{\frac{2}{\pi}}d_\W\bigl(\calL(\sqrt{U}),\calL(\sqrt{\mu^{-1}N})\bigr)\,.\]
\end{lemma}

\begin{proof}
Let $h:\R\rightarrow\R$ be $1$-Lipschitz. Then, 

\begin{align}\label{ateq14}
 &\bigl|E[h(\sigma\sqrt{U}\zeta)]-E[h(\sigma\sqrt{\mu^{-1}N}\zeta)]\bigr|\notag\\
&=\Bigl|E\bigl[E[h(\sigma\sqrt{U}\zeta)|\zeta]-E[h(\sigma\sqrt{\mu^{-1}N}\zeta)|\zeta]\bigr]\Bigr|\notag\\
&\leq E\Bigl|E[h(\sigma\sqrt{U}\zeta)|\zeta]-E[h(\sigma\sqrt{\mu^{-1}N}\zeta)|\zeta]\Bigr|\notag\\
&=E\Bigl|E\bigl[h(\sigma\sqrt{U}\zeta)-h(\sigma\sqrt{\mu^{-1}N}\zeta)\,\bigl|\,\zeta\bigr]\Bigr|
\end{align}

Now, for every $s\in\R$ by independence we have 

\begin{align}\label{ateq15}
& \Bigl|E\bigl[h(\sigma\sqrt{U}\zeta)-h(\sigma\sqrt{\mu^{-1}N}\zeta)\,\bigl|\,\zeta=s\bigr]\Bigr|\notag\\
&=\Bigl|E\bigl[h(\sigma s\sqrt{U})-h(\sigma s\sqrt{\mu^{-1}N})\bigr]\Bigr|\notag\\
&\leq \abs{s}\sigma d_\W\bigl(\calL(\sqrt{U}),\calL(\sqrt{\mu^{-1}N})\bigr)\,.
\end{align}
 
Hence, from \eqref{ateq14} and \eqref{ateq15} we obtain that 

\begin{align*}
 &\bigl|E[h(\sigma\sqrt{U}\zeta)]-E[h(\sigma\sqrt{\mu^{-1}N}\zeta)]\bigr|\\
&\leq \sigma E\bigl[\abs{\zeta}\bigr] d_\W\bigl(\calL(\sqrt{U}),\calL(\sqrt{\mu^{-1}N})\bigr) \\
&=\sigma\sqrt{\frac{2}{\pi}}d_\W\bigl(\calL(\sqrt{U}),\calL(\sqrt{\mu^{-1}N})\bigr)\,,
\end{align*}

as claimed.

\end{proof}

\begin{lemma}\label{atle4}
 Suppose that $U=1$ is constant. Then, 

\[d_\W\bigl(\calL(\sqrt{U}),\calL(\sqrt{\mu^{-1}N})\bigr)\leq\frac{\sqrt{\Var(N)}}{\mu}\,.\]

\end{lemma}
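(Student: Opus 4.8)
The plan is to exploit that $\sqrt{U}=1$ is deterministic, so that the Wasserstein distance on the left collapses to a single expectation. Since $\calH=\W$ consists of the $1$-Lipschitz functions and the second argument is a Dirac mass at $1$, the Wasserstein distance between a law and a point mass equals the mean absolute deviation from that point: taking the Lipschitz test function $h(x)=\abs{x-1}$ shows the supremum is attained, so I would first record that
\[
d_\W\bigl(\calL(\sqrt{U}),\calL(\sqrt{\mu^{-1}N})\bigr)=E\bigl|\sqrt{\mu^{-1}N}-1\bigr|\,.
\]
This reduces the entire statement to an elementary moment bound for the nonnegative random variable $\mu^{-1}N$, which by our standing assumption $\mu=E[N]$ has mean exactly $1$.

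Next I would linearise the square root. Using the factorisation $\sqrt{x}-1=\tfrac{x-1}{\sqrt{x}+1}$, valid for every $x\geq0$, together with the uniform lower bound $\sqrt{\mu^{-1}N}+1\geq1$, I obtain the pointwise estimate $\bigl|\sqrt{\mu^{-1}N}-1\bigr|\leq\bigl|\mu^{-1}N-1\bigr|$. Taking expectations gives
\[
E\bigl|\sqrt{\mu^{-1}N}-1\bigr|\leq E\bigl|\mu^{-1}N-1\bigr|\,.
\]

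Finally I would pass from the first absolute moment to the standard deviation. Because $E[\mu^{-1}N]=1$, the random variable $\mu^{-1}N-1$ is centered, so an application of the Cauchy--Schwarz (equivalently Jensen) inequality yields
\[
E\bigl|\mu^{-1}N-1\bigr|\leq\Bigl(E\bigl[(\mu^{-1}N-1)^2\bigr]\Bigr)^{1/2}=\sqrt{\Var(\mu^{-1}N)}=\frac{\sqrt{\Var(N)}}{\mu}\,,
\]
which is precisely the asserted bound. I do not anticipate any genuine obstacle here: the only mildly delicate point is the passage over the square root, but since the denominator $\sqrt{\mu^{-1}N}+1$ is bounded below by $1$ irrespective of the distribution of $N$, the estimate $\abs{\sqrt{x}-1}\leq\abs{x-1}$ holds with no extra hypotheses, and the remaining steps are routine.
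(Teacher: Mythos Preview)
Your proof is correct and follows essentially the same route as the paper: both reduce the Wasserstein distance to $E\bigl|\sqrt{\mu^{-1}N}-1\bigr|$ via the Lipschitz property, linearise the square root through the factorisation $\abs{1-x}=\abs{1-x^2}/(1+x)\leq\abs{1-x^2}$, and then pass to the standard deviation by Cauchy--Schwarz. The only cosmetic difference is that you explicitly identify the Wasserstein distance to a Dirac mass as the mean absolute deviation, whereas the paper bounds $\bigl|E[h(\sqrt{\mu^{-1}N})]-h(1)\bigr|$ directly for generic $1$-Lipschitz $h$.
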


\begin{proof}
For a nonnegative real number $x$ we have the inequality

\begin{equation}\label{ateq16}
 \abs{1-x}=\frac{\abs{1-x^2}}{1+x}\leq\abs{1-x^2}\,.
\end{equation}
 
From \eqref{ateq16} with $x=\sqrt{\mu^{-1}N}$ and for each $1$-Lipschitz function $h$ we obtain

\begin{align}\label{ateq17}
 \Bigl|E\bigl[h(\sqrt{\mu^{-1}N})-h(1)\bigr]\Bigr|&\leq E\bigl|1-\sqrt{\mu^{-1}N}\bigr|\notag\\
&\leq E\bigl|1-\mu^{-1}N\bigr|\leq\sqrt{\Var(\mu^{-1}N)}\notag\\
&= \frac{\sqrt{\Var(N)}}{\mu}\,.
\end{align}

\end{proof}

Now, we turn to the general case that the $X_j$ are no more supposed to be identically distributed. Here, we will assume that 

\begin{equation}\label{ateq18}
 0<\hat{\sigma}^2:=\lim_{n\to\infty}\frac{1}{n}\sum_{j=1}^n \sigma_j^2<\infty
\end{equation}

exists. Note that in the iid case $\hat{\sigma}^2=\sigma^2=\frac{1}{n}\sum_{j=1}^n \sigma_j^2$ for each $n\geq1$. We further define 

\begin{equation}\label{sigmanhat}
 \hat{\sigma}_n^2:=\sum_{j=1}^n \sigma_j^2\,.
\end{equation}

Note that we have the general representation

\begin{equation}\label{repzgen}
Z\stackrel{\D}{=}\sqrt{\mu^{-1} N}\hat{\sigma}_n \zeta\,.
\end{equation}

The following lemma will be useful.

\begin{lemma}\label{atle5}
Let $\sigma,\tau\in(0,\infty)$. Then, 
\begin{enumerate}[{\normalfont(a)}]
 \item $d_\K\bigl(N(0,\sigma^2),N(0,\tau^2)\bigr)\leq\min\bigl(\abs{1-\frac{\tau^2}{\sigma^2}},\abs{1-\frac{\sigma^2}{\tau^2}}\bigr)$ and 
 \item $d_\W\bigl(N(0,\sigma^2),N(0,\tau^2)\bigr)\leq 2\min\bigl(\sigma\abs{1-\frac{\tau^2}{\sigma^2}},\tau\abs{1-\frac{\sigma^2}{\tau^2}}\bigr)$.
\end{enumerate}
\end{lemma}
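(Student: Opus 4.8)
The plan is to prove the two parts by different elementary devices: for (b) an explicit coupling, and for (a) a reduction by scaling followed by a short case analysis. For part (b) I would couple the two Gaussians through a single standard normal variable. Fixing $\zeta\sim N(0,1)$, the pair $(\sigma\zeta,\tau\zeta)$ has marginals $N(0,\sigma^2)$ and $N(0,\tau^2)$, and since $d_\W$ is bounded by the $L^1$-cost of any coupling,
\[
d_\W\bigl(N(0,\sigma^2),N(0,\tau^2)\bigr)\leq E\bigl|\sigma\zeta-\tau\zeta\bigr|=\abs{\sigma-\tau}\,E\abs{\zeta}=\sqrt{\tfrac{2}{\pi}}\,\abs{\sigma-\tau}\leq\abs{\sigma-\tau}\,.
\]
It then remains to check the purely algebraic inequalities $\abs{\sigma-\tau}\leq 2\sigma\abs{1-\tau^2/\sigma^2}$ and $\abs{\sigma-\tau}\leq2\tau\abs{1-\sigma^2/\tau^2}$; both follow from $\abs{\sigma^2-\tau^2}=\abs{\sigma-\tau}(\sigma+\tau)$, since e.g. $2\sigma\abs{1-\tau^2/\sigma^2}=2\abs{\sigma-\tau}(\sigma+\tau)/\sigma\geq2\abs{\sigma-\tau}$. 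Taking the minimum of the two right-hand sides gives the claim.

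For part (a) I would first reduce to a one-parameter problem. Because $d_\K$ is symmetric in its two arguments and the right-hand side is symmetric in $\sigma$ and $\tau$, I may assume $\sigma\leq\tau$; then the minimum equals $1-\sigma^2/\tau^2$, as $\tau^2/\sigma^2-1\geq1-\sigma^2/\tau^2$. Using that $d_\K$ is scale-invariant (as already noted in the proof of Proposition \ref{berry}), I rescale by $\tau^{-1}$ and set $r:=\sigma/\tau\in(0,1]$, so that it suffices to bound $\sup_{z}\abs{\Phi(z/r)-\Phi(z)}$, with $\Phi$ the standard normal distribution function. Since $z\mapsto\Phi(z/r)-\Phi(z)$ is odd, the supremum of the absolute difference is attained for $z\geq0$, where
\[
0\leq\Phi(z/r)-\Phi(z)=\int_z^{z/r}\phi(u)\,du\,,
\]
$\phi$ denoting the standard normal density; the goal becomes $\sup_{z\geq0}\int_z^{z/r}\phi(u)\,du\leq1-r^2$.

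Here I would split according to the size of $r$. If $r^2\leq\tfrac12$, the trivial estimate $\Phi(z/r)-\Phi(z)\leq1-\Phi(z)\leq\tfrac12\leq1-r^2$ already suffices. If $r^2>\tfrac12$, I instead use the monotonicity of $\phi$ on $[0,\infty)$ to obtain $\int_z^{z/r}\phi(u)\,du\leq(z/r-z)\phi(z)=\tfrac{1-r}{r}\,z\phi(z)$, followed by the elementary maximum $\max_{z\geq0}z\phi(z)=\phi(1)=1/\sqrt{2\pi e}\leq\tfrac12$, which yields the bound $\tfrac{1-r}{2r}$; the inequality $\tfrac{1-r}{2r}\leq1-r^2$ is equivalent to $1\leq2r(1+r)$ and holds because $r>1/\sqrt2$. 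Combining the two regimes proves $d_\K\leq1-r^2$, and undoing the normalisation gives the assertion.

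The only genuine obstacle is the regime where the two variances are comparable, i.e. $r$ close to $1$: there the target $1-r^2$ is small and the crude bound $\tfrac12$ is useless, so one really needs the sharper estimate through $\max_{z\geq0}z\phi(z)$. Everything else is bookkeeping; in particular I would deliberately avoid locating the exact maximiser of $\Phi(z/r)-\Phi(z)$ (which solves a transcendental equation), since the two-regime argument already delivers the stated, intentionally non-sharp, bound.
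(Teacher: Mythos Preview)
Your argument is correct, and it proceeds by a genuinely different route than the paper. The paper handles both parts uniformly via Stein's method: for a test function $h$ (indicator or $1$-Lipschitz) one takes the standard solution $f_h$ of Stein's equation $f'(x)-xf(x)=h(x)-E[h(\zeta)]$, plugs in $c\zeta$ with $c=\tau/\sigma$, and uses the identity $E[c\zeta f(c\zeta)]=c^2E[f'(c\zeta)]$ together with the known bounds $\fnorm{f_z'}\leq1$ (Kolmogorov) and $\fnorm{f_h'}\leq2$ (Wasserstein) to obtain $|1-c^2|$ and $2\sigma|1-c^2|$ directly. Your approach is more elementary in that it avoids Stein's method entirely: the coupling $(\sigma\zeta,\tau\zeta)$ in (b) even yields the sharper intermediate estimate $d_\W\leq\sqrt{2/\pi}\,|\sigma-\tau|$, and the two-regime analysis in (a) handles the Kolmogorov case by pure calculus. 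The trade-off is that the paper's argument is shorter and treats both metrics in one stroke, whereas your proof of (a) requires the somewhat ad hoc case split at $r^2=\tfrac12$; on the other hand, your argument is self-contained and does not rely on the Stein bounds from \cite{CGS}.
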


\begin{proof}
 The proof uses Stein's method for univariate normal approximation. Let $\zeta\sim N(0,1)$. We first prove (a). Let $t\in\R$ be arbitrary. Then, 

\begin{align}\label{ateq19}
\bigl|P(\sigma\zeta\leq t)-P(\tau\zeta\leq t)\bigr|&=\bigl|P(\zeta\leq\frac{t}{\sigma})-P(\frac{\tau}{\sigma}\zeta\leq\frac{t}{\sigma})\bigr|\notag\\
&\leq d_\K\bigl(N(0,1),N(0,\frac{\tau^2}{\sigma^2})\bigr)\,. 
\end{align}
 
Hence, by symmetry,  

\begin{equation}\label{ateq20}
 d_K\bigl(N(0,\sigma^2),N(0,\tau^2)\bigr)\leq\max\Bigl(d_\K\bigl(N(0,1),N(0,\frac{\tau^2}{\sigma^2})\bigr),d_\K\bigl(N(0,1),N(0,\frac{\sigma^2}{\tau^2})\bigr)\Bigr)\,.
\end{equation}

Now, for $z\in\R$ let $f_z$ be the standard solution to \textit{Stein's equation} 

\begin{equation}\label{steineqz}
f'(x)-xf(x)=1_{(-\infty,z]}-P(\zeta\leq z)\,,
\end{equation}

which is given by $f_z(x)=\frac{\Phi(z\wedge x)-\Phi(x)\Phi(z)}{\phi(x)}$,where $\phi$ and $\Phi$ denote the density function and the cumulative distribution function of $N(0,1)$, respectively.
It is known that $f_z$ is Lipschitz with $\fnorm{f_z'}\leq1$ for each $z\in\R$ and that for each $c>0$ and every Lipschitz function $f$ it holds that 
$E[c\zeta f(c\zeta)]=c^2 E[f'(c\zeta)]$. Hence, by inserting 
$c\zeta$ into \eqref{steineqz} and taking expectations

\begin{align*}
\bigl|P(c\zeta\leq z)-P(\zeta\leq z)\bigr|&=\bigl|E[f_z'(c\zeta)-c\zeta f_z(c\zeta)]\notag\\
&=\abs{1-c^2} \abs{E[f_z'(c\zeta)]}\leq \abs{1-c^2}\,,
\end{align*}

yielding 

\[d_\K\bigl(N(0,1),N(0,c^2)\bigr)\Bigr)\leq\abs{1-c^2}\,.\]

By \eqref{ateq20} this implies (a).\\

The proof of (b) is similar by using the solution to Stein's equation for a $1$-Lipschitz test function $h$. One merely has to take care of the scaling properties 
of Wasserstein distance in the equality corresponding to \eqref{ateq19} and note that in this case $\fnorm{f_h'}\leq 2$ for the standard solution $f_h$ of Stein's 
equation (see, e.g. \cite{CGS}).   

\end{proof}

The strategy to keep track of the non-identically distributed case is to use the inequality 

\begin{equation}\label{ateq21}
d_{\calH}\bigl(\calL(Z),\calL(\hat{Y})\bigr)\leq d_{\calH}\bigl(\calL(Z),\calL(\hat{Z})\bigr)+d_{\calH}\bigl(\calL(\hat{Z}),\calL(\hat{Y})\bigr)\,,
\end{equation}

where $\hat{Z}:=\hat{\sigma}\sqrt{\mu^{-1}N}\zeta$ and $\hat{Y}:=\hat{\sigma}\sqrt{U}\zeta$. 
The first term on the right hand side of \eqref{ateq21} may be bounded with the help of Lemma \ref{atle5} (see Lemma \ref{atle6}) and the second term may be handled by 
Proposition \ref{atprop1} in the case of non-constant $U$ and by Proposition \ref{atprop2}, if $U=1$ is constant and normal approximation is plausible. 
We will need one more lemma, comparing $\calL(Z)$ and $\calL(\hat{Z})$.

\begin{lemma}\label{atle6}
With the above definitions we have:
\begin{enumerate}[{\normalfont (a)}]
 \item $d_\K\bigl(\calL(Z),\calL(\hat{Z})\bigr)\leq \sum_{n=1}^\infty P(N=n)
\min\Bigl(\bigl|1-\frac{\hat{\sigma}^2}{\hat{\sigma}_n^2}\bigr|,\bigl|1-\frac{\hat{\sigma}_n^2}{\hat{\sigma}^2}\bigr|\Bigr)$.
\item $d_\W\bigl(\calL(Z),\calL(\hat{Z})\bigr)\leq 2\mu^{-1/2}\sum_{n=1}^\infty \sqrt{n}P(N=n)
\min\Bigl(\hat{\sigma}_n\bigl|1-\frac{\hat{\sigma}^2}{\hat{\sigma}_n^2}\bigr|,\hat{\sigma}\bigl|1-\frac{\hat{\sigma}_n^2}{\hat{\sigma}^2}\bigr|\Bigr)$.
\end{enumerate}
\end{lemma}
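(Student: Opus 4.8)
The plan is to condition on the index $N$ and thereby reduce both inequalities to the comparison of two \emph{centered} Gaussians, which is precisely the content of Lemma~\ref{atle5}. Since $Z$ and $\hat{Z}=\hat{\sigma}\sqrt{\mu^{-1}N}\zeta$ are built from one and the same index $N$ (with $\zeta\sim N(0,1)$ independent of $N$), the argument that led from \eqref{ateq5} to \eqref{ateq6}, now applied to the pair $(Z,\hat{Z})$ in place of $(W,Z)$, gives
\[
d_{\calH}\bigl(\calL(Z),\calL(\hat{Z})\bigr)\leq E\Bigl[d_{\calH}\bigl(\calL(Z|N),\calL(\hat{Z}|N)\bigr)\Bigr]=\sum_{n=1}^\infty P(N=n)\,d_{\calH}\bigl(\calL(Z|N=n),\calL(\hat{Z}|N=n)\bigr),
\]
the final equality using that $N$ is integer valued, so the conditional expectation collapses to a sum. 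This holds verbatim for $\calH=\K$ and for $\calH=\W$, so the two parts differ only in the Gaussian estimate applied to each summand.

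Next I would identify the two conditional laws on the event $\{N=n\}$. From the setup, $\calL(Z|N=n)=N(0,s_n^2)$ with $s_n^2=\mu^{-1}\sum_{j=1}^n\sigma_j^2=\mu^{-1}n\,\hat{\sigma}_n^2$ (so that $\hat{\sigma}_n^2$ is the empirical mean of $\sigma_1^2,\dots,\sigma_n^2$, in accordance with the limit $\hat{\sigma}^2$ in \eqref{ateq18}), while $\calL(\hat{Z}|N=n)=N(0,\hat{\sigma}^2\mu^{-1}n)$. Thus each summand is the distance between $N(0,\sigma^2)$ and $N(0,\tau^2)$ with $\sigma^2=\mu^{-1}n\hat{\sigma}_n^2$ and $\tau^2=\hat{\sigma}^2\mu^{-1}n$, whose variance ratio is $\tau^2/\sigma^2=\hat{\sigma}^2/\hat{\sigma}_n^2$. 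For part (a) I would simply insert these into Lemma~\ref{atle5}(a): its bound $\min\bigl(|1-\tau^2/\sigma^2|,|1-\sigma^2/\tau^2|\bigr)$ becomes exactly $\min\bigl(|1-\hat{\sigma}^2/\hat{\sigma}_n^2|,|1-\hat{\sigma}_n^2/\hat{\sigma}^2|\bigr)$, and summing against $P(N=n)$ yields (a).

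For part (b) the same substitution into Lemma~\ref{atle5}(b) is used, but now the two branches of the minimum additionally carry the scale factors $\sigma=\sqrt{\mu^{-1}n}\,\hat{\sigma}_n$ and $\tau=\hat{\sigma}\sqrt{\mu^{-1}n}$. The bound $2\min\bigl(\sigma|1-\tau^2/\sigma^2|,\tau|1-\sigma^2/\tau^2|\bigr)$ then equals $2\sqrt{\mu^{-1}n}\,\min\bigl(\hat{\sigma}_n|1-\hat{\sigma}^2/\hat{\sigma}_n^2|,\hat{\sigma}|1-\hat{\sigma}_n^2/\hat{\sigma}^2|\bigr)$, and summing against $P(N=n)$ reproduces the stated bound. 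The conditioning step and the Gaussian comparison are both immediate, so I expect the only genuinely fiddly point to be the bookkeeping in (b): one must check that the factor $\sqrt{\mu^{-1}n}=\mu^{-1/2}\sqrt{n}$ is common to \emph{both} branches of the minimum before it can be pulled outside, which relies on the two conditional variances sharing the same $\mu^{-1}n$ scaling.
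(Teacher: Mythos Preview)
Your proposal is correct and follows essentially the same route as the paper: condition on $N$, identify the two conditional laws as centered Gaussians with variances $\mu^{-1}n\hat{\sigma}_n^2$ and $\mu^{-1}n\hat{\sigma}^2$, and then apply Lemma~\ref{atle5} termwise. Your parenthetical reading of $\hat{\sigma}_n^2$ as the empirical mean $\frac{1}{n}\sum_{j=1}^n\sigma_j^2$ is indeed the one required for consistency with \eqref{repzgen} and with $s_N^2=\mu^{-1}N\hat{\sigma}_N^2$, and your remark about the common factor $\mu^{-1/2}\sqrt{n}$ in part~(b) matches exactly the scaling step the paper carries out.
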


\begin{proof}
Let $\calH$ be either equal to $\K$ or to $\W$. Then, recalling representation \eqref{repzgen}, for $h\in\calH$

\begin{align}\label{ateq22}
 &\Bigl|E\bigl[h(Z)\bigr]-E\bigl[h(\hat{Z})\bigr]\Bigr|\notag\\
&\leq\sum_{n=1}^\infty P(N=n)\Bigl|E\bigl[h(\mu^{-1/2}\sqrt{n}\hat{\sigma}_n\zeta)\bigr]-E\bigl[h(\mu^{-1/2}\sqrt{n}\hat{\sigma}\zeta)\bigr]\Bigr|\notag\\
&\leq \sum_{n=1}^\infty P(N=n) d_{\calH}\Bigl(N(0,\mu^{-1}n\hat{\sigma}_n^2), N(0,\mu^{-1}n\hat{\sigma}^2)\Bigr)\,.
\end{align}
 
If $\calH=\K$, then the bound in \eqref{ateq22} equals 

\begin{equation*}
 \sum_{n=1}^\infty P(N=n) d_{\K}\Bigl(N(0,\hat{\sigma}_n^2), N(0,\hat{\sigma}^2)\Bigr)
\leq\sum_{n=1}^\infty P(N=n)
\min\Bigl(\bigl|1-\frac{\hat{\sigma}^2}{\hat{\sigma}_n^2}\bigr|,\bigl|1-\frac{\hat{\sigma}_n^2}{\hat{\sigma}^2}\bigr|\Bigr)
\end{equation*}

by Lemma \ref{atle5} (a). If $\calH=\W$, then the bound in \eqref{ateq22} equals 

\begin{align*}
&\mu^{-1/2}\sum_{n=1}^\infty \sqrt{n}P(N=n) d_{\W}\Bigl(N(0,\hat{\sigma}_n^2), N(0,\hat{\sigma}^2)\Bigr)\\
&\leq2\mu^{-1/2}\sum_{n=1}^\infty \sqrt{n}P(N=n)
\min\Bigl(\hat{\sigma}_n\bigl|1-\frac{\hat{\sigma}^2}{\hat{\sigma}_n^2}\bigr|,\hat{\sigma}\bigl|1-\frac{\hat{\sigma}_n^2}{\hat{\sigma}^2}\bigr|\Bigr)
\end{align*}

by Lemma \ref{atle5} (b). This completes the proof.

\end{proof}

Putting the pieces together, we have the following proposition.

\begin{prop}\label{atprop3}
Under the general assumptions and if \eqref{ateq18} is satisfied, we have
\begin{enumerate}[{\normalfont (a)}]
 \item $d_{\K}\bigl(\calL(Z),\calL(\hat{Y})\bigr)\leq \sum_{n=1}^\infty P(N=n)
\min\Bigl(\bigl|1-\frac{\hat{\sigma}^2}{\hat{\sigma_n}^2}\bigr|,\bigl|1-\frac{\hat{\sigma_n}^2}{\hat{\sigma}^2}\bigr|\Bigr)\\ + d_\K(\calL(\mu^{-1}N),\calL(U))$ and
 \item $d_{\W}\bigl(\calL(Z),N(0,\hat{\sigma}^2)\bigr)\leq 2\mu^{-1/2}\sum_{n=1}^\infty \sqrt{n}P(N=n)
\min\Bigl(\hat{\sigma_n}\bigl|1-\frac{\hat{\sigma}^2}{\hat{\sigma_n}^2}\bigr|,\hat{\sigma}\bigl|1-\frac{\hat{\sigma_n}^2}{\hat{\sigma}^2}\bigr|\Bigr)
+\hat{\sigma}\sqrt{\frac{2}{\pi}}\frac{\sqrt{\Var(N)}}{\mu}$,\\
where $\hat{\sigma}_n^2$ was defined in \eqref{sigmanhat}. 
\end{enumerate}
\end{prop}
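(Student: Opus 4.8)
The plan is to read off the assertion directly from the triangle-inequality decomposition \eqref{ateq21} and then bound each of the two resulting terms by a result already established. Recall that $\hat{Z}=\hat{\sigma}\sqrt{\mu^{-1}N}\zeta$ and $\hat{Y}=\hat{\sigma}\sqrt{U}\zeta$, so that $\hat{Y}$ plays the role of the intermediate comparison law, and that both Proposition \ref{atprop1} and Proposition \ref{atprop2} are stated for an arbitrary scale parameter $\sigma>0$, which I will specialise to $\sigma=\hat{\sigma}$.

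For part (a) I would first apply \eqref{ateq21} with $\calH=\K$ to obtain
\[d_\K\bigl(\calL(Z),\calL(\hat{Y})\bigr)\leq d_\K\bigl(\calL(Z),\calL(\hat{Z})\bigr)+d_\K\bigl(\calL(\hat{Z}),\calL(\hat{Y})\bigr)\,.\]
The first summand is controlled directly by Lemma \ref{atle6}(a), which already yields precisely the $\min$-sum appearing in the claimed bound. For the second summand I would invoke Proposition \ref{atprop1} with $\sigma=\hat{\sigma}$, since $\hat{Z}$ and $\hat{Y}$ are exactly of the form $\hat{\sigma}\sqrt{\mu^{-1}N}\zeta$ and $\hat{\sigma}\sqrt{U}\zeta$; this replaces $d_\K\bigl(\calL(\hat{Z}),\calL(\hat{Y})\bigr)$ by $d_\K\bigl(\calL(\mu^{-1}N),\calL(U)\bigr)$. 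Summing the two estimates gives the assertion of (a).

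For part (b), where $U=1$ is constant, I would observe that $\hat{Y}=\hat{\sigma}\sqrt{1}\,\zeta=\hat{\sigma}\zeta$ has law $N(0,\hat{\sigma}^2)$, so that the target distribution in \eqref{ateq21} becomes the Gaussian $N(0,\hat{\sigma}^2)$. Applying \eqref{ateq21} with $\calH=\W$, the first term $d_\W\bigl(\calL(Z),\calL(\hat{Z})\bigr)$ is bounded by Lemma \ref{atle6}(b), producing the weighted $\min$-sum carrying the factor $2\mu^{-1/2}\sqrt{n}$. For the remaining term $d_\W\bigl(\calL(\hat{Z}),N(0,\hat{\sigma}^2)\bigr)$ I would apply Proposition \ref{atprop2} with $\sigma=\hat{\sigma}$, which supplies the contribution $\hat{\sigma}\sqrt{\frac{2}{\pi}}\frac{\sqrt{\Var(N)}}{\mu}$. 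Adding the two bounds completes (b).

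Since the substantive analytic work has already been carried out in Lemma \ref{atle6} and in Propositions \ref{atprop1} and \ref{atprop2}, this proof is essentially a bookkeeping assembly, and I do not anticipate a genuine obstacle. The only points requiring care are to verify that the scale parameter $\hat{\sigma}$ is inserted consistently when specialising the two propositions, and, in part (b), that the constancy $U=1$ correctly identifies the mixture $\hat{Y}$ with the Gaussian law $N(0,\hat{\sigma}^2)$ so that Proposition \ref{atprop2} rather than Proposition \ref{atprop1} is the appropriate tool.
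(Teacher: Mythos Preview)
Your proposal is correct and follows exactly the same approach as the paper: apply the triangle inequality \eqref{ateq21}, bound the first term via Lemma \ref{atle6}, and bound the second term via Proposition \ref{atprop1} (for (a)) or Proposition \ref{atprop2} (for (b)) with $\sigma$ specialised to $\hat{\sigma}$. Your additional remark that in (b) the choice $U=1$ identifies $\hat{Y}$ with $N(0,\hat{\sigma}^2)$ is precisely the justification implicit in the paper's short proof.
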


\begin{proof}
The claims follow from Propositions \ref{atprop1} and \ref{atprop2} with $\sigma$ replaced by $\hat{\sigma}$ to deal with the respective second term on the right hand side 
of \eqref{ateq21} and from Lemma \ref{atle6} to handle the first term.
 
\end{proof}

\section{Applications}\label{apps}

\subsection{Geometric Sums}\label{geo}
In this subsection, $N=N_p$ will always have the geometric distribution on $\{1,2,\dotsc\}$ with parameter $p\in(0,1)$, i.e. $P(N=n)=p(1-p)^{n-1}$ 
for each integer $n\geq1$. Note that this implies $\mu:=\mu_p:=E[N_p]=p^{-1}$. In order to derive moore concrete rates of convergence for geometric sums from Corollary \ref{atcor1}, 
we will first prove the following lemma.

\begin{lemma}\label{geole1}
 Let $N_p$ have the geometric distribution with parameter $p\in(0,1)$. Then, 

\[\sqrt{p}\leq E\Bigl[\frac{1}{\sqrt{N_p}}\Bigr]\leq\frac{2\sqrt{p}}{1+\sqrt{p}}\leq2\sqrt{p}\,.\]

\end{lemma}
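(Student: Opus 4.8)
The plan is to treat the three inequalities separately, the outer two being immediate and the central one carrying essentially all of the work. The lower bound is a one-line application of Jensen's inequality: the map $x\mapsto x^{-1/2}$ is convex on $(0,\infty)$, so
\[
E\Bigl[\tfrac{1}{\sqrt{N_p}}\Bigr]=E\bigl[N_p^{-1/2}\bigr]\ge\bigl(E[N_p]\bigr)^{-1/2}=\mu^{-1/2}=\sqrt p,
\]
using $\mu=p^{-1}$. The rightmost inequality $\frac{2\sqrt p}{1+\sqrt p}\le 2\sqrt p$ is trivial since $1+\sqrt p\ge 1$.

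For the central upper bound I would set $q:=1-p$ and write
\[
E\Bigl[\tfrac{1}{\sqrt{N_p}}\Bigr]=\sum_{n\ge 1}\frac{p\,q^{\,n-1}}{\sqrt n}=\frac{p}{q}\sum_{n\ge 1}\frac{q^n}{\sqrt n},
\]
then estimate the series termwise by the central-binomial inequality $\frac{1}{\sqrt n}\le\frac{2}{4^n}\binom{2n}{n}$ (for $n\ge 1$). Summing and invoking the generating-function identity $\sum_{n\ge 0}\binom{2n}{n}x^n=(1-4x)^{-1/2}$ at $x=q/4<1/4$ yields $\sum_{n\ge 1}\frac{q^n}{\sqrt n}\le 2\bigl((1-q)^{-1/2}-1\bigr)=2(p^{-1/2}-1)$. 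Multiplying by $p/q$ and factoring $q=(1-\sqrt p)(1+\sqrt p)$ collapses the right-hand side exactly to $\frac{2\sqrt p}{1+\sqrt p}$, which is the assertion.

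The crux is the termwise inequality $\frac{1}{\sqrt n}\le\frac{2}{4^n}\binom{2n}{n}$, equivalently $4^n\le 2\sqrt n\,\binom{2n}{n}$. I would prove it by induction on $n$, tracking $b_n:=4^n/\binom{2n}{n}$ with the goal $b_n\le 2\sqrt n$. The exact recursion $b_{n+1}=b_n\cdot\frac{2(n+1)}{2n+1}$ reduces the inductive step to $2(n+1)\sqrt n\le(2n+1)\sqrt{n+1}$, which after squaring is $4n(n+1)\le(2n+1)^2$, i.e.\ $0\le 1$. The base case $n=1$ holds with \emph{equality} ($b_1=2=2\sqrt 1$); this equality is precisely what makes the whole bound sharp as $p\to 1$ (where $N_p\equiv 1$ forces $E[1/\sqrt{N_p}]\to 1=\lim_{p\to1}\frac{2\sqrt p}{1+\sqrt p}$), and it explains why cruder termwise estimates such as $\frac{1}{\sqrt n}\le 2(\sqrt n-\sqrt{n-1})$ fail: they lose a constant factor already at $n=1$ and thus overshoot the target near $p=1$. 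I expect that identifying this sharp central-binomial inequality (and recognizing the accompanying generating function) is the main obstacle; once it is in hand, the remaining manipulations are routine bookkeeping.
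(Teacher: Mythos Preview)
Your proof is correct. The lower bound and the trivial rightmost inequality are handled exactly as in the paper, and for the central upper bound you reach the very same intermediate estimate
\[
\sum_{n\ge 1}\frac{(1-p)^n}{\sqrt n}\ \le\ 2\bigl(p^{-1/2}-1\bigr),
\]
after which the algebra is identical. The difference lies in how this estimate is obtained. The paper writes $f(p)=\sum_{k\ge1}k^{-1/2}(1-p)^k$, differentiates to get $f'(p)=-p^{-1}E[\sqrt{N_p}]$, bounds $E[\sqrt{N_s}]\le s^{-1/2}$ by Jensen's inequality for the concave function $\sqrt{\cdot}$, and integrates from $p$ to $1$ to recover $f(p)\le\int_p^1 s^{-3/2}\,ds=2(p^{-1/2}-1)$. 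Your route is combinatorial: the termwise bound $n^{-1/2}\le 2\cdot 4^{-n}\binom{2n}{n}$ (with equality at $n=1$, which is exactly what forces sharpness as $p\to1$) together with the generating function $\sum_{n\ge0}\binom{2n}{n}x^n=(1-4x)^{-1/2}$ evaluated at $x=(1-p)/4$. The paper's argument has the pleasant feature of using Jensen twice, once in each direction, and needs no auxiliary identities; yours is entirely self-contained once the central-binomial inequality is checked, avoids calculus, and makes transparent why the constant $2$ is optimal. Either way the same key inequality on $f(p)$ drops out, so the two proofs are equivalent in strength.
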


\begin{proof}
We will first prove the left-most inequality. Let $\phi:(0,\infty)\rightarrow(0,\infty)$ be defined by $\phi(x):=\frac{1}{\sqrt{x}}$. Then, $\phi$ is convex 
and hence, by Jensen's inequality, we have 

\[\sqrt{p}=\phi\bigl(E[N_p]\bigr)\leq E\bigl[\phi(N_p)\bigr]=E\Bigl[\frac{1}{\sqrt{N_p}}\Bigr]\,.\]

Now, note that the right-most inequality is trivial. For the remaining inequality we write 

\begin{align}\label{geoeq1}
 E\Bigl[\frac{1}{\sqrt{N_p}}\Bigr]&=\sum_{k=1}^\infty\frac{1}{\sqrt{k}}p(1-p)^{k-1}=\frac{p}{1-p}f(p)\,,
\end{align}
 
where $f(p):=\sum_{k=1}^\infty\frac{1}{\sqrt{k}}(1-p)^{k}$ for $0<p\leq1$. Note that $f$ is continuous and has an analytic extension to $(0,2)$. We have 

\[f'(p)=-\sum_{k=1}^\infty\sqrt{k}(1-p)^{k-1}=-\frac{1}{p}E\bigl[\sqrt{N_p}\bigr]\,.\] 

Thus, by the fundamental theorem of calculus, we have

\begin{align}\label{geoeq2}
f(p)&=f(1)+\int_1^p f'(s)ds =-\int_1^p  \frac{1}{s}E\bigl[\sqrt{N_s}\bigr]ds\notag\\
&=\int_p^1\frac{E\bigl[\sqrt{N_s}\bigr]}{s}ds\,.
\end{align}

Since $\sqrt{x}$ is concave on $(0,\infty)$, it follows again by Jensen's inequality  that 

\[E\bigl[\sqrt{N_s}\bigr]\leq\sqrt{E[N_s]}=\frac{1}{\sqrt{s}}\,.\]

Thus, form \eqref{geoeq2} we conclude that 

\begin{equation}\label{geoeq3}
f(p)\leq \int_p^1 s^{-3/2}ds=-2s^{-1/2}\Bigl|_p^1=2\Bigl(\frac{1}{\sqrt{p}}-1\Bigr) \,.
\end{equation}

Hence, from \eqref{geoeq1} and \eqref{geoeq3} we obtain 

\begin{equation*}
 E\Bigl[\frac{1}{\sqrt{N_p}}\Bigr]\leq\frac{p}{1-p}2\Bigl(\frac{1}{\sqrt{p}}-1\Bigr) =2\sqrt{p}\frac{1-\sqrt{p}}{1-p}=\frac{2\sqrt{p}}{1+\sqrt{p}}\,,
\end{equation*}

proving the inequality.

\end{proof}

\begin{remark}\label{georem1}
 Since the function $\sqrt{x}$ is concave, on could try to make use of the inequality 

\begin{equation*}
 E\Bigl[\frac{1}{\sqrt{N_p}}\Bigr]=E\Bigl[\sqrt{\frac{1}{N_p}}\Bigr]\leq\sqrt{E\bigl[\frac{1}{N_p}\bigr]}
\end{equation*}

for the upper bound. But as one can show that $E[N_p^{-1}]=\dfrac{-p\log p}{1-p}$, this direct application of Jensen's inequality would only yield the worse bound 

\[E\Bigl[\frac{1}{\sqrt{N_p}}\Bigr]\leq\sqrt{p}\Bigl(\frac{-\log p}{1-p}\Bigr)^{1/2}\,.\]

\end{remark}

From Corollary \ref{atcor1} and Lemma \ref{geole1} we obtain the following result.

\begin{prop}\label{geoprop1}
Let, additionally to the general assumptions, the random variables $X_j$ be identically distributed with $\sigma^2:=E[X_1]$ and $\xi:=E\abs{X_1}^3$, then 

\[d_\K\bigl(\calL(W),\calL(Z)\bigr)\leq\frac{2C_\K\xi}{\sigma^3}\sqrt{p}\,.\]

\end{prop}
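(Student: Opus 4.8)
The statement is an immediate consequence of the two results that directly precede it, so my plan is simply to chain them together. First I would invoke Corollary \ref{atcor1}(b), which in the identically distributed case asserts that
\[
d_\K\bigl(\calL(W),\calL(Z)\bigr)\leq\frac{C_\K\xi}{\sigma^3}\,E\Bigl[\frac{1}{\sqrt{N}}\Bigr].
\]
Since in the present subsection $N=N_p$ carries the geometric distribution with parameter $p$, the factor $E[1/\sqrt{N_p}]$ appearing here is precisely the quantity that was estimated in Lemma \ref{geole1}.

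The second and final step is then to substitute the upper bound $E[1/\sqrt{N_p}]\leq 2\sqrt{p}$ furnished by Lemma \ref{geole1} into the inequality above. This yields
\[
d_\K\bigl(\calL(W),\calL(Z)\bigr)\leq\frac{C_\K\xi}{\sigma^3}\cdot 2\sqrt{p}=\frac{2C_\K\xi}{\sigma^3}\sqrt{p},
\]
which is exactly the claimed bound. Note that it would be slightly wasteful to use instead the sharper estimate $E[1/\sqrt{N_p}]\leq 2\sqrt{p}/(1+\sqrt{p})$ from the same lemma, since the cruder factor $2\sqrt{p}$ already produces the clean order-$\sqrt{p}$ statement being advertised.

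I do not anticipate any genuine obstacle, as both ingredients have already been established: the substantive work resides in the abstract conditional Berry--Esseen estimate underlying Corollary \ref{atcor1} and in the sharp reciprocal-square-root moment bound of Lemma \ref{geole1}. The only point requiring attention is notational, namely that the symbol $\sigma^2$ in the hypothesis must be read as $E[X_1^2]$ (the printed $E[X_1]$ being a typo), so that the ratio $\xi/\sigma^3$ coincides with the one occurring in Corollary \ref{atcor1}(b). With that identification the two displays above combine directly and the proof is complete.
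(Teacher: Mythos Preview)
Your proposal is correct and matches the paper's own argument exactly: the paper simply states that the result follows from Corollary~\ref{atcor1} and Lemma~\ref{geole1}, which is precisely the chaining you describe. Your remark about the typo $\sigma^2:=E[X_1]$ versus $E[X_1^2]$ is also accurate.
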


It can easily be checked by means of characteristic functions that $pN_p\stackrel{\D}{\rightarrow}U\sim\Exp(1)$ as $p\downarrow 0$. 
From Example 3.1 of \cite{PeRoe} we even have the following quantitative version of this result: 

\begin{equation}\label{geoeq4}
d_\K\bigl(\calL(pN_p), \Exp(1)\bigr)\leq12 p
\end{equation}

From \eqref{ateq9} we see that we should approximate $\calL(W)$ by the distribution of the random variable $Y=\sigma\sqrt{U}\zeta$, where $\zeta\sim N(0,1)$ 
is independent of $U$. With the help of chatacteristic functions one can easily show that $Y$ has the Laplace distribution $\Laplace(0,\sigma/\sqrt{2})$, where for 
$a\in\R$ and $b>0$ the distribution $\Laplace(a,b)$ is defined by the density $f_{a,b}:\R\rightarrow\R$ with 

\begin{equation*}
f_{a,b}(x):=\frac{1}{2b}e^{-\frac{\abs{x-a}}{b}}\,.
\end{equation*}
 
Our observations now lead at once to the following theorem.

\begin{theorem}[Berry-Esseen bound for geometric sums of iid centered random variables]\label{geotheo1} 
Let $X_1,X_2,\dotsc$ be independent and identically distributed random variables with $E[X_1]=0$, $0<\sigma^2:=E[X_1^2]$ and $\xi:=E\abs{X_1}^3<\infty$ and 
let $N=N_p$ be independent of the $X_j$ and have the geometric distribution on $\{1,2,\dotsc\}$ with parameter $p\in(0,1)$. Then,

\[d_\K\Bigl(\calL(W),\Laplace\bigl(0,\frac{\sigma}{\sqrt{2}}\bigr)\Bigr)\leq\frac{2C_\K\xi}{\sigma^3}\sqrt{p} +12p\,.\]

\end{theorem}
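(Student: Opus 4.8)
The plan is to interpolate between $\calL(W)$ and the target Laplace law through the intermediate Gaussian scale-mixture $Z$, using the triangle inequality for the Kolmogorov distance. Writing $Y:=\sigma\sqrt{U}\zeta$ with $U\sim\Exp(1)$ and $\zeta\sim N(0,1)$ independent, the excerpt has already identified $\calL(Y)=\Laplace(0,\sigma/\sqrt{2})$, so it suffices to estimate $d_\K(\calL(W),\calL(Y))$, and
\[
d_\K\bigl(\calL(W),\calL(Y)\bigr)\leq d_\K\bigl(\calL(W),\calL(Z)\bigr)+d_\K\bigl(\calL(Z),\calL(Y)\bigr)\,.
\]

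First I would control the first summand directly by Proposition \ref{geoprop1}, which gives $d_\K(\calL(W),\calL(Z))\leq\frac{2C_\K\xi}{\sigma^3}\sqrt{p}$; this is the step into which the Berry-Esseen input of Proposition \ref{berry} and the estimate on $E[1/\sqrt{N_p}]$ from Lemma \ref{geole1} have already been absorbed.

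For the second summand I would invoke the representation $Z\stackrel{\D}{=}\sigma\sqrt{\mu^{-1}N}\zeta$, valid in the iid case, so that Proposition \ref{atprop1} applies verbatim with this $U$ and yields $d_\K(\calL(Z),\calL(Y))\leq d_\K(\calL(\mu^{-1}N),\calL(U))$. Since $\mu=p^{-1}$ we have $\mu^{-1}N=pN_p$, whence the quantitative exponential approximation \eqref{geoeq4} bounds the right-hand side by $12p$. Combining the two estimates gives the asserted bound.

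The main obstacle here is essentially bookkeeping rather than analysis: the genuine work has already been carried out in Proposition \ref{geoprop1} (Berry-Esseen together with the $E[1/\sqrt{N_p}]$ estimate), in Proposition \ref{atprop1} (the conditioning-on-$\zeta$ argument reducing the distance of scale mixtures to the distance of the underlying scales), and in \eqref{geoeq4}. The only points requiring care are the distributional identifications — verifying that $U\sim\Exp(1)$ is indeed the correct limiting variable, so that $\calL(Y)$ is \emph{exactly} the stated Laplace law, and that $\mu^{-1}N$ coincides with $pN_p$ — after which the result follows from a single application of the triangle inequality.
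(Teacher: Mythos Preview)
Your proof is correct and follows exactly the paper's own argument: the paper's proof consists of the single sentence that the result ``follows from Proposition \ref{atprop1}, Proposition \ref{geoprop1} and \eqref{geoeq4} by the triangle inequality for $d_\K$,'' which is precisely the decomposition and the three ingredients you invoke.
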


\begin{proof}
This follows from Proposition \ref{atprop1}, Proposition \ref{geoprop1} and \eqref{geoeq4} by the triangle inequality for $d_\K$. 

\end{proof}

Now, we turn to not necessarily identically distributed summands $X_j$. Let in accordance with $\hat{\sigma}_n$ the quantities $\hat{\xi}_n$ be defined by 

\begin{equation}\label{geoeq5}
 \hat{\xi}_n:=\frac{1}{n}\sum_{j=1}^n \xi_j\,.
\end{equation}

Then, we can prove the following theorem.

\begin{theorem}[Berry-Esseen bound for geometric sums of independent, centered random variables]\label{geotheo2}  
Let $X_1,X_2,\dotsc$ be independent random variables with $E[X_j]=0$, $0<\sigma_j^2:=E[X_j^2]$ and $\xi_j:=E\abs{X_j}^3<\infty$ for $j\geq1$ and 
let $N=N_p$ be independent of the $X_j$ and have the geometric distribution on $\{1,2,\dotsc\}$ with parameter $p\in(0,1)$. Assume that \eqref{ateq18} is satisfied. 
Then, with $\hat{\sigma}_n^2$ defined by \eqref{sigmanhat} and with $\hat{\xi}_n$ defined by \eqref{geoeq5} it holds that 

\begin{align*}
 d_\K\Bigl(\calL(W),\Laplace\bigl(0,\frac{\hat{\sigma}}{\sqrt{2}}\bigr)\Bigr)&\leq 12p + p\sum_{n=1}^\infty (1-p)^{n-1}
\min\Bigl(\bigl|1-\frac{\hat{\sigma}^2}{\hat{\sigma}_n^2}\bigr|,\bigl|1-\frac{\hat{\sigma}_n^2}{\hat{\sigma}^2}\bigr|\Bigr)\\ 
&\;+C_\K E\Bigl[\frac{1}{\sqrt{N}}\frac{\hat{\xi}_N}{\hat{\sigma}_N^3}\Bigr]\,.
\end{align*}
\end{theorem}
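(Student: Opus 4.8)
The plan is to interpose the Gaussian mixture $Z$ and split, by the triangle inequality for $d_\K$,
\[
d_\K\Bigl(\calL(W),\Laplace\bigl(0,\tfrac{\hat\sigma}{\sqrt2}\bigr)\Bigr)\leq d_\K\bigl(\calL(W),\calL(Z)\bigr)+d_\K\Bigl(\calL(Z),\Laplace\bigl(0,\tfrac{\hat\sigma}{\sqrt2}\bigr)\Bigr),
\]
so that the two summands on the right reproduce, respectively, the $C_\K$-expectation and the $12p$-plus-$\min$-sum of the claim. The key observation, already recorded in the discussion preceding \eqref{geoeq4}, is that for $U\sim\Exp(1)$ and $\zeta\sim N(0,1)$ independent of $U$ the scale mixture $\hat Y:=\hat\sigma\sqrt U\zeta$ is exactly $\Laplace(0,\hat\sigma/\sqrt2)$-distributed. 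Hence the target law is precisely the measure $\calL(\hat Y)$ appearing in Proposition \ref{atprop3}(a), and it remains only to specialise the abstract bounds to the geometric index.

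For the second summand I would apply Proposition \ref{atprop3}(a) with this choice $U\sim\Exp(1)$, which is legitimate since \eqref{ateq18} is assumed. As $N=N_p$ is geometric, $\mu=p^{-1}$, so that $\mu^{-1}N=pN_p$ and $P(N=n)=p(1-p)^{n-1}$; inserting these turns the proposition into
\[
d_\K\bigl(\calL(Z),\calL(\hat Y)\bigr)\leq p\sum_{n=1}^\infty(1-p)^{n-1}\min\Bigl(\bigl|1-\tfrac{\hat\sigma^2}{\hat\sigma_n^2}\bigr|,\bigl|1-\tfrac{\hat\sigma_n^2}{\hat\sigma^2}\bigr|\Bigr)+d_\K\bigl(\calL(pN_p),\Exp(1)\bigr).
\]
The quantitative geometric-to-exponential bound \eqref{geoeq4} estimates the last Kolmogorov distance by $12p$, producing the $12p$ term and the weighted $\min$-sum of the statement.

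For the first summand I would invoke the conditional Berry-Esseen estimate of Proposition \ref{attheo1}(b). One substitutes the conditional variance $s_N^2=\mu^{-1}\sum_{j=1}^N\sigma_j^2$ together with $\mu=p^{-1}$; the powers of $\mu$ cancel, and after re-expressing the partial sums $\sum_{j=1}^N\sigma_j^2$ and $\sum_{j=1}^N\xi_j=N\hat\xi_N$ through the averaged quantities $\hat\xi_N$ (see \eqref{geoeq5}) and $\hat\sigma_N$, the bound reduces exactly to the third summand $C_\K E\bigl[\tfrac{1}{\sqrt N}\tfrac{\hat\xi_N}{\hat\sigma_N^3}\bigr]$. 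Adding the two estimates and collecting terms then yields the asserted inequality.

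Since every ingredient is already in place, no genuinely new analytic difficulty arises; the step demanding the most care is the bookkeeping of the geometric specialisation — consistently replacing $\mu$ by $p^{-1}$ and $P(N=n)$ by $p(1-p)^{n-1}$, and reducing the Berry-Esseen term so that the $\mu$-dependence cancels and only the displayed expectation in $N$, $\hat\xi_N$ and $\hat\sigma_N$ survives.
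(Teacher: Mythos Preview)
Your proposal is correct and follows essentially the same route as the paper's proof: the triangle inequality with the intermediate mixture $Z$, Proposition~\ref{attheo1}(b) for $d_\K(\calL(W),\calL(Z))$ after rewriting $s_N^2=\mu^{-1}N\hat\sigma_N^2$ so that the $\mu$-powers cancel, and Proposition~\ref{atprop3}(a) together with \eqref{geoeq4} for $d_\K(\calL(Z),\Laplace(0,\hat\sigma/\sqrt2))$ once one identifies $\hat Y=\hat\sigma\sqrt U\zeta$ with the Laplace law. The bookkeeping you flag is indeed the only thing to watch, and you have handled it correctly.
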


\begin{proof}
Note that since $s_N^2=\mu^{-1}N\hat{\sigma}_N^2$, by Proposition \ref{attheo1} (b)

\[ d_\K\bigl(\calL(W),\calL(Z)\bigr)\leq\frac{C_\K}{\mu^{3/2}}E\Bigl[\frac{1}{s_N^3}\sum_{j=1}^N \xi_j\Bigr]=C_\K E\Bigl[\frac{1}{\sqrt{N}}\frac{\hat{\xi}_N}{\hat{\sigma}_N^3}\Bigr]\,.\]

Furthermore, by Proposition \ref{atprop3} (a) and \eqref{geoeq4}

\begin{align*}
d_{\K}\bigl(\calL(Z),\Laplace\bigl(0,\frac{\hat{\sigma}}{\sqrt{2}}\bigr)\leq \sum_{n=1}^\infty p(1-p)^{n-1}
\min\Bigl(\bigl|1-\frac{\hat{\sigma}^2}{\hat{\sigma_n}^2}\bigr|,\bigl|1-\frac{\hat{\sigma_n}^2}{\hat{\sigma}^2}\bigr|\Bigr) + 12p\,.
\end{align*} 

Hence, the claim follows from the triangle inequality.

\end{proof}

\begin{remark}\label{georem2}
A different way to prove Berry-Esseen bounds in the non-identically distributed setting is to make use of the bound 

\begin{equation}\label{geoeq6}
 d_{\K}\Bigl(\calL(s_N^2),\Exp\bigl(\frac{1}{E[\sigma_N^2]}\bigr)\Bigr)\leq \frac{12p}{E[\sigma_N^2]}\sup_{j\geq1}\sigma_j^2\,, 
\end{equation}

which follows from Theorem 3.1 in \cite{PeRoe}, noting that $E[s_N^2]=E[\sigma_N^2]$ in case of the geometric distribution. This, in turn, follows from \eqref{ateq2} 
because $P(N\geq j)=(1-p)^{j-1}$ and $\mu^{-1}=p$. Then, by a suitable Proposition, which is completely analogous to Proposition \ref{atprop1}, one can show that since $Z\stackrel{\D}{=}s_N\zeta$

\begin{equation}\label{geoeq7}
d_\K\Bigl(\calL(Z),\Laplace\bigl(0,(E[\sigma_N^2])^{1/2}/\sqrt{2}\bigr)\Bigr)\leq \frac{12p}{E[\sigma_N^2]}\sup_{j\geq1}\sigma_j^2\,.
\end{equation}

Thus, we have the following alternative bound in the situation of Theorem \ref{geotheo2} (without assuming the existence of $\hat{\sigma}^2$):

\begin{equation}\label{geoeq8}
 d_\K\Bigl(\calL(W),\Laplace\bigl(0,(E[\sigma_N^2])^{1/2}/\sqrt{2}\bigr)\Bigr)\leq C_\K E\Bigl[\frac{1}{\sqrt{N}}\frac{\hat{\xi}_N}{\hat{\sigma}_N^3}\Bigr] 
+ \frac{12p}{E[\sigma_N^2]}\sup_{j\geq1}\sigma_j^2
\end{equation}

Note that both, the bound from Theorem \ref{geotheo2} and \eqref{geoeq8} reduce to the bound from Theorem \ref{geotheo1} in the iid case.
\end{remark}

\subsection{The Poisson case}\label{poisson}
In this subsection, we discuss the case that $N=N_\la\sim\Poi(\la)$ with $\la>0$. Note that this implies $E[N]=\Var(N)=\la$ and, hence, by Remark \ref{atrem2} 
normal approximation of the random sum indexed by $N$ is appropriate, as $\la\to\infty$. Again, we begin with the identically distributed case.

\begin{theorem}\label{potheo1}
Let $X_1,X_2,\dotsc$ be independent and identically distributed random variables with $E[X_1]=0$, $0<\sigma^2:=E[X_1^2]$ and $\xi:=E\abs{X_1}^3<\infty$ and let 
$N\sim\Poi(\la)$, $\la>0$, be independent of the $X_j$. Then, 

\[d_\W\bigl(\calL(W),N(0,\sigma^2)\bigr)\leq\Bigl(\frac{C_\W\xi}{\sigma^2}+\sigma\sqrt{\frac{2}{\pi}}\Bigr)\frac{1}{\sqrt{\la}}\,.\]
\end{theorem}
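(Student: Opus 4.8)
The plan is to interpolate through the auxiliary variable $Z=\mu^{-1/2}\sum_{j=1}^N Z_j$ introduced in \eqref{ateq3} and then invoke the two relevant results from Section \ref{setup}, specializing the index quantities to the Poisson case. By the triangle inequality for $d_\W$,
\[
d_\W\bigl(\calL(W),N(0,\sigma^2)\bigr)\leq d_\W\bigl(\calL(W),\calL(Z)\bigr)+d_\W\bigl(\calL(Z),N(0,\sigma^2)\bigr),
\]
so it suffices to bound each summand separately. The only facts specific to this theorem are the elementary Poisson identities $\mu=E[N]=\la$ and $\Var(N)=\la$; these make normal approximation plausible in the sense of Remark \ref{atrem2}, since then $\Var(N)=\la$ is of smaller order than $\mu^2=\la^2$.

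For the first term I would appeal directly to Corollary \ref{atcor1}(a), which applies because the $X_j$ are iid. It yields
\[
d_\W\bigl(\calL(W),\calL(Z)\bigr)\leq\frac{C_\W\xi}{\mu^{1/2}\sigma^2}=\frac{C_\W\xi}{\sigma^2}\cdot\frac{1}{\sqrt{\la}},
\]
the equality being just the substitution $\mu=\la$.

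For the second term I would use the distributional identity $Z\stackrel{\D}{=}\sigma\sqrt{\mu^{-1}N}\zeta$ (with $\zeta\sim N(0,1)$ independent of $N$), valid in the iid case, so that Proposition \ref{atprop2} applies verbatim and gives
\[
d_\W\bigl(\calL(Z),N(0,\sigma^2)\bigr)\leq\sigma\sqrt{\frac{2}{\pi}}\cdot\frac{\sqrt{\Var(N)}}{\mu}=\sigma\sqrt{\frac{2}{\pi}}\cdot\frac{1}{\sqrt{\la}},
\]
where the last step is exactly the Poisson input $\sqrt{\Var(N)}/\mu=\sqrt{\la}/\la=1/\sqrt{\la}$. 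Adding the two displays and factoring out $1/\sqrt{\la}$ reproduces the claimed bound. I do not anticipate any genuine obstacle: the analytic content (the conditional Berry--Esseen estimate feeding Corollary \ref{atcor1} and the Wasserstein scale-mixture comparison of Proposition \ref{atprop2}) has already been established in the abstract section, and the proof here is simply an assembly of those two estimates using the equal-mean-and-variance property of the Poisson law.
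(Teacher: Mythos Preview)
Your proposal is correct and matches the paper's proof essentially line for line: the paper also bounds $d_\W(\calL(W),\calL(Z))$ via Corollary~\ref{atcor1}(a), bounds $d_\W(\calL(Z),N(0,\sigma^2))$ via Proposition~\ref{atprop2}, and combines them by the triangle inequality after substituting $\mu=\Var(N)=\la$.
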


\begin{proof}
From Corollary \ref{atcor1} (a) we have 

\[d_\W\bigl(\calL(W),\calL(Z)\bigr)\leq \frac{C_\W\xi}{\sqrt{\la}\sigma^2}\]

and from Proposition \ref{atprop2} it follows that 

\[d_\W\bigl(\calL(Z),N(0,\sigma^2)\bigr)\leq \sigma\sqrt{\frac{2}{\pi}}\frac{1}{\sqrt{\la}}\,.\]

Hence, the assertion follows from the triangle inequality for $d_\W$.

\end{proof}

The next result generalizes Theorem \ref{potheo1} to non-identically distributed summands.

\begin{theorem}\label{potheo2}
Let $X_1,X_2,\dotsc$ be independent random variables with $E[X_j]=0$, $0<\sigma_j^2:=E[X_j^2]$ and $\xi_j:=E\abs{X_j}^3<\infty$, $j\geq1$, and let 
$N\sim\Poi(\la)$, $\la>0$, be independent of the $X_j$. Assume that \eqref{ateq18} is satisfied. Then, with $\hat{\sigma}_n^2$ defined by \eqref{sigmanhat} and with $\hat{\xi}_n$ defined by \eqref{geoeq5} it holds that  

\begin{align*}
d_\W\bigl(\calL(W),N(0,\hat{\sigma}^2)\bigr)&\leq\hat{\sigma}\sqrt{\frac{2}{\pi}}\frac{1}{\sqrt{\la}}
+\frac{C_\W}{\sqrt{\la}} E\Bigl[\frac{\hat{\xi}_N}{\hat{\sigma}_N^2}\Bigr]\\
&\;+\frac{2e^{-\la}}{\sqrt{\la}}\sum_{n=1}^\infty \frac{\la^{n}}{\sqrt{n}(n-1)!}
\min\Bigl(\hat{\sigma}_n\bigl|1-\frac{\hat{\sigma}^2}{\hat{\sigma}_n^2}\bigr|,\hat{\sigma}\bigl|1-\frac{\hat{\sigma}_n^2}{\hat{\sigma}^2}\bigr|\Bigr)\,.
\end{align*}
\end{theorem}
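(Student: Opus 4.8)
The plan is to derive the bound from the triangle inequality for $d_\W$ together with the two abstract estimates already established, exactly in the spirit of the proofs of Theorem~\ref{potheo1} and Theorem~\ref{geotheo2}, and then to insert the specific moments of the Poisson law. I would begin with
\[
d_\W\bigl(\calL(W),N(0,\hat{\sigma}^2)\bigr)\leq d_\W\bigl(\calL(W),\calL(Z)\bigr)+d_\W\bigl(\calL(Z),N(0,\hat{\sigma}^2)\bigr)
\]
and bound the two summands separately.

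For the first summand I would apply Proposition~\ref{attheo1}(a). Since $\mu=\la$ for the Poisson index, and using $s_N^2=\mu^{-1}N\hat{\sigma}_N^2$ together with $\sum_{j=1}^N\xi_j=N\hat{\xi}_N$ (the latter by the definition~\eqref{geoeq5} of $\hat{\xi}_n$), the factor $\frac{C_\W}{\mu^{3/2}}E\bigl[s_N^{-2}\sum_{j=1}^N\xi_j\bigr]$ simplifies: the two occurrences of $N$ cancel and one power of $\la$ is absorbed, leaving $\frac{C_\W}{\sqrt{\la}}E\bigl[\hat{\xi}_N/\hat{\sigma}_N^2\bigr]$, which is precisely the second term of the asserted bound.

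For the second summand I would invoke Proposition~\ref{atprop3}(b) in the case $U=1$, which is the pertinent branch here because $\mu^{-1}N=\la^{-1}N$ concentrates at $1$ as $\la\to\infty$ (the constant-$U$ situation already anticipated in Remark~\ref{atrem2}). That proposition delivers two pieces. Inserting $\Var(N)=\mu=\la$ turns the normal-approximation piece $\hat{\sigma}\sqrt{2/\pi}\,\sqrt{\Var(N)}/\mu$ into $\hat{\sigma}\sqrt{2/\pi}\,/\sqrt{\la}$, the first term of the bound. For the mixing piece $2\mu^{-1/2}\sum_{n\geq1}\sqrt{n}\,P(N=n)\min(\cdots)$ I would substitute the Poisson weights $P(N=n)=e^{-\la}\la^n/n!$ and use the elementary identity $\sqrt{n}/n!=1/(\sqrt{n}(n-1)!)$; this rewrites $\sqrt{n}\,\la^n/n!$ as $\la^n/(\sqrt{n}(n-1)!)$ and reproduces the third term. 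Summing the three contributions gives the claimed inequality.

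I do not expect a genuine obstacle beyond this bookkeeping: the two abstract inputs do all the analytic work, and what remains is the careful algebraic simplification in each summand---cancelling the factors of $N$ and one power of $\la$ in the first, and rewriting the Poisson factorial inside the series in the second---together with the observation, already justified by Remark~\ref{atrem2}, that the constant-$U$ branch of Proposition~\ref{atprop3} is the right one in the Poisson regime.
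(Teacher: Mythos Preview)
Your proposal is correct and follows essentially the same route as the paper's own proof: triangle inequality, Proposition~\ref{attheo1}(a) for the $d_\W(\calL(W),\calL(Z))$ term, and Proposition~\ref{atprop3}(b) for the $d_\W(\calL(Z),N(0,\hat{\sigma}^2))$ term, followed by the same algebraic simplifications using $\mu=\Var(N)=\la$ and the Poisson weights. The only cosmetic difference is that the paper writes $s_N^2=\la^{-1}\hat{\sigma}_N^2$ in this proof (treating $\hat{\sigma}_N^2$ as the sum, cf.\ \eqref{sigmanhat}) whereas you use $s_N^2=\mu^{-1}N\hat{\sigma}_N^2$ (treating it as the average, as in the proof of Theorem~\ref{geotheo2}); either reading leads to the same final expression.
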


\begin{proof}
From Proposition \ref{attheo1} (a) it follows that 

\[ d_\W\bigl(\calL(W),\calL(Z))\leq \frac{C_\W}{\mu^{3/2}}E\Bigl[\frac{1}{s_N^2}\sum_{j=1}^N \xi_j\Bigr]=\frac{C_\W}{\sqrt{\la}} E\Bigl[\frac{\hat{\xi}_N}{\hat{\sigma}_N^2}\Bigr]\,,\]

since $s_N^2=\la^{-1}\hat{\sigma}_N^2$. From Proposition \ref{atprop3} (b) it follows by some minor calculations that 

\[d_{\W}\bigl(\calL(Z),N(0,\hat{\sigma}^2)\bigr)\leq\frac{2e^{-\la}}{\sqrt{\la}}\sum_{n=1}^\infty \frac{\la^{n}}{\sqrt{n}(n-1)!}
\min\Bigl(\hat{\sigma}_n\bigl|1-\frac{\hat{\sigma}^2}{\hat{\sigma}_n^2}\bigr|,\hat{\sigma}\bigl|1-\frac{\hat{\sigma}_n^2}{\hat{\sigma}^2}\bigr|\Bigr)
+\hat{\sigma}\sqrt{\frac{2}{\pi}}\frac{1}{\sqrt{\la}}\,. \]

Again, the claim follows from the triangle inequality for $d_\W$.

\end{proof}

\subsection{The binomial case}\label{bin}

In this subsection we let $N=N_m\sim\Bin(m,p)$ where $m\geq1$ is an integer, which is supposed to be large, and $p\in(0,1)$ is considered fixed. Note that this implies 
$E[N]=mp$ and $\Var(N)=mp(1-p)$. Thus, by Remark \ref{atrem2} normal approximation of the random sum indexed by $N$ is plausible, as $m\to\infty$. 
As usual, we first state our theorem for identically distributed summands.

\begin{theorem}\label{bintheo1}
 Let $X_1,X_2,\dotsc$ be independent and identically distributed random variables with $E[X_1]=0$, $0<\sigma^2:=E[X_1^2]$ and $\xi:=E\abs{X_1}^3<\infty$ and let 
$N\sim\Bin(m,p)$ be independent of the $X_j$. Then, 

\[d_\W\bigl(\calL(W),N(0,\sigma^2)\bigr)\leq\Bigl(\frac{C_\W\xi}{\sigma^2}+\sigma\sqrt{\frac{2(1-p)}{\pi}}\Bigr)\frac{1}{\sqrt{mp}}\,.\]

\end{theorem}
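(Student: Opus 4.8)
The plan is to mirror the proof of Theorem~\ref{potheo1} for the Poisson case, decomposing $d_\W\bigl(\calL(W),N(0,\sigma^2)\bigr)$ through the intermediate variable $Z$ from \eqref{ateq3} by the triangle inequality for the Wasserstein distance. Since $N\sim\Bin(m,p)$ satisfies $\mu=E[N]=mp$ and $\Var(N)=mp(1-p)$, both the bound relating $W$ to $Z$ and the bound relating $Z$ to the limiting normal are already available from earlier results; the only remaining work is substituting these moments and simplifying.

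First I would invoke Corollary~\ref{atcor1}(a), which in the iid case gives
\[d_\W\bigl(\calL(W),\calL(Z)\bigr)\leq\frac{C_\W\xi}{\mu^{1/2}\sigma^2}=\frac{C_\W\xi}{\sigma^2\sqrt{mp}}\,.\]
Next, recalling that $Z\stackrel{\D}{=}\sigma\sqrt{\mu^{-1}N}\zeta$ with $\zeta\sim N(0,1)$ independent of $N$, Proposition~\ref{atprop2} yields
\[d_\W\bigl(\calL(Z),N(0,\sigma^2)\bigr)\leq\sigma\sqrt{\frac{2}{\pi}}\frac{\sqrt{\Var(N)}}{\mu}=\sigma\sqrt{\frac{2}{\pi}}\frac{\sqrt{mp(1-p)}}{mp}\,.\]

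The one small algebraic point is to rewrite this second term by extracting the factor $(mp)^{-1/2}$: since $\sqrt{mp(1-p)}/(mp)=\sqrt{1-p}/\sqrt{mp}$, the term equals $\sigma\sqrt{2(1-p)/\pi}\,(mp)^{-1/2}$. Adding the two estimates via the triangle inequality and factoring out $(mp)^{-1/2}$ then produces exactly the claimed bound. I expect no genuine obstacle here beyond this bookkeeping: the entire content resides in the previously established Propositions, so the argument is a direct specialization of the Poisson proof, with the single parameter $\lambda$ there replaced by the distinct binomial mean $mp$ and variance $mp(1-p)$, the latter accounting for the extra factor $\sqrt{1-p}$ that appears inside the square root.
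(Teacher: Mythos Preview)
Your proposal is correct and follows exactly the approach the paper intends: the paper's own proof is explicitly omitted as ``completely analogous to that of Theorem~\ref{potheo1},'' and your argument carries out precisely that analogy, invoking Corollary~\ref{atcor1}(a) and Proposition~\ref{atprop2} with $\mu=mp$ and $\Var(N)=mp(1-p)$ and combining via the triangle inequality.
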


\begin{proof}
The proof, being completely analogous to that of Theorem \ref{potheo1}, is omitted.

\end{proof}

\begin{remark}\label{binrem1}
\begin{enumerate}[(i)]
\item The bound in Theorem \ref{bintheo1} shows that normal approximation is appropriate even if $p$ goes to zero as long as $mp$ goes to infinity. This includes for instance 
the cases $p=p_m=cm^{-\alpha}$ for some constant $c$ and $0<\alpha<1$.
\item Note that for $p=1$ the bound from Theorem \ref{bintheo1} equals the Berry-Esseen bound in the Wasserstein distance for 
a sum of $m$ random variables. This makes sense since a random variable $N\sim\Bin(m,1)$ equals $m$ almost surely. 
\end{enumerate}
\end{remark}

The next theorem generalizes the last result to the case of not necessarily identically distributed summands.

\begin{theorem}\label{bintheo2}
 Let $X_1,X_2,\dotsc$ be independent random variables with $E[X_j]=0$, $0<\sigma_j^2:=E[X_j^2]$ and $\xi_j:=E\abs{X_j}^3<\infty$, $j\geq1$, and let 
$N\sim\Bin(m,p)$ be independent of the $X_j$. Assume that \eqref{ateq18} is satisfied. Then, with $\hat{\sigma}_n^2$ defined by \eqref{sigmanhat} and with $\hat{\xi}_n$ defined by \eqref{geoeq5} it holds that  

\begin{align*}
&d_\W\bigl(\calL(W),N(0,\hat{\sigma}^2)\bigr)\leq\Bigl(\hat{\sigma}\sqrt{\frac{2(1-p)}{\pi}}
+C_\W E\Bigl[\frac{\hat{\xi}_N}{\hat{\sigma}_N^2}\Bigr]\Bigr)\frac{1}{\sqrt{mp}}\\
&+\;\frac{2}{\sqrt{mp}}\sum_{n=1}^m\sqrt{n}\binom{m}{n}p^{n}(1-p)^{m-n} 
\min\Bigl(\hat{\sigma}_n\bigl|1-\frac{\hat{\sigma}^2}{\hat{\sigma}_n^2}\bigr|,\hat{\sigma}\bigl|1-\frac{\hat{\sigma}_n^2}{\hat{\sigma}^2}\bigr|\Bigr)\,.
\end{align*}
\end{theorem}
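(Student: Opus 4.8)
The plan is to mirror the structure of the proof of Theorem \ref{potheo2}, since the abstract machinery in Section \ref{setup} applies verbatim once we specialize to $N\sim\Bin(m,p)$. The key numerical inputs are $\mu=E[N]=mp$ and $\Var(N)=mp(1-p)$, and the overall strategy is the triangle inequality
\[
d_\W\bigl(\calL(W),N(0,\hat{\sigma}^2)\bigr)\leq d_\W\bigl(\calL(W),\calL(Z)\bigr)+d_\W\bigl(\calL(Z),N(0,\hat{\sigma}^2)\bigr)\,,
\]
where each summand is controlled by results already established.

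First I would bound the term $d_\W(\calL(W),\calL(Z))$ using Proposition \ref{attheo1}~(a). Since $s_N^2=\mu^{-1}N\hat{\sigma}_N^2=(mp)^{-1}N\hat{\sigma}_N^2$, the factor $\frac{1}{s_N^2}\sum_{j=1}^N\xi_j$ simplifies: writing $\sum_{j=1}^N\xi_j=N\hat{\xi}_N$ we get $\frac{1}{s_N^2}\sum_{j=1}^N\xi_j=\frac{mp\,\hat{\xi}_N}{\hat{\sigma}_N^2}$, so that $\frac{C_\W}{\mu^{3/2}}E[\frac{1}{s_N^2}\sum_{j=1}^N\xi_j]=\frac{C_\W}{(mp)^{3/2}}\,mp\,E[\frac{\hat{\xi}_N}{\hat{\sigma}_N^2}]=\frac{C_\W}{\sqrt{mp}}E[\frac{\hat{\xi}_N}{\hat{\sigma}_N^2}]$. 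This produces exactly the $C_\W E[\hat{\xi}_N/\hat{\sigma}_N^2]/\sqrt{mp}$ contribution in the claimed bound.

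For the second term, I would invoke Proposition \ref{atprop3}~(b) with $U=1$, which is the relevant limiting object here since $\mu^{-1}N=(mp)^{-1}N\to 1$ in probability as $m\to\infty$ (by the weak law, $\Var(\mu^{-1}N)=\frac{1-p}{mp}\to 0$). That proposition gives
\[
d_\W\bigl(\calL(Z),N(0,\hat{\sigma}^2)\bigr)\leq 2\mu^{-1/2}\sum_{n=1}^\infty\sqrt{n}\,P(N=n)\min\Bigl(\hat{\sigma}_n\bigl|1-\tfrac{\hat{\sigma}^2}{\hat{\sigma}_n^2}\bigr|,\hat{\sigma}\bigl|1-\tfrac{\hat{\sigma}_n^2}{\hat{\sigma}^2}\bigr|\Bigr)+\hat{\sigma}\sqrt{\tfrac{2}{\pi}}\frac{\sqrt{\Var(N)}}{\mu}\,.
\]
Substituting $P(N=n)=\binom{m}{n}p^n(1-p)^{m-n}$, $\mu^{-1/2}=(mp)^{-1/2}$, and $\frac{\sqrt{\Var(N)}}{\mu}=\frac{\sqrt{mp(1-p)}}{mp}=\sqrt{\frac{1-p}{mp}}$ converts the first term into the displayed sum over $n=1$ to $m$ (the upper limit is $m$ since the binomial is supported on $\{0,\dots,m\}$ and the $n=0$ term vanishes) and turns the Gaussian-mismatch term into $\hat{\sigma}\sqrt{\frac{2(1-p)}{\pi}}\frac{1}{\sqrt{mp}}$. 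Combining the two estimates by the triangle inequality yields the stated bound after grouping the $1/\sqrt{mp}$ prefactors.

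I do not expect a serious obstacle, as every step is an application of a previously proved result followed by substitution of the binomial mean and variance; the proof is indeed \emph{completely analogous} to that of Theorem \ref{potheo2}, the only difference being the arithmetic arising from $\mu=mp$ and $\Var(N)=mp(1-p)$ rather than $\mu=\Var(N)=\la$. The one point requiring mild care is the bookkeeping of the constant $\sqrt{\frac{2(1-p)}{\pi}}$: it is essential to carry the factor $(1-p)$ out of $\Var(N)$ rather than absorbing it, so that the bound correctly degenerates (for $p=1$, where $N=m$ almost surely) to the ordinary Berry--Esseen bound for a deterministic sum of $m$ summands, consistent with Remark \ref{binrem1}.
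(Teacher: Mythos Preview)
Your proposal is correct and follows exactly the approach the paper indicates: the paper's own proof simply states that it is ``completely analogous to that of Theorem~\ref{potheo2}'' and omits the details, and your argument reproduces precisely that analogy---triangle inequality, Proposition~\ref{attheo1}(a) for $d_\W(\calL(W),\calL(Z))$, Proposition~\ref{atprop3}(b) for $d_\W(\calL(Z),N(0,\hat{\sigma}^2))$, and substitution of $\mu=mp$, $\Var(N)=mp(1-p)$, $P(N=n)=\binom{m}{n}p^n(1-p)^{m-n}$.
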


\begin{proof}
 The proof, being completely analogous to that of Theorem \ref{potheo2}, is omitted.

\end{proof}

\section*{Acknowledgements}
I am grateful to Peter Eichelsbacher for pointing me to the topic of random sums and I thank Mathias Rafler for a lot of fruitful conversation.
\normalem
\bibliography{randomsums}{}
\bibliographystyle{alpha}

\end{document}